\documentclass[11pt]{amsart}
\usepackage{amsmath}
\usepackage{amsfonts,amssymb,amscd,amsthm,comment,cite,float}
\usepackage[thinlines]{easytable}
\usepackage{array}
\textwidth=360pt
\textheight=615pt
\usepackage[shortlabels]{enumitem}
\usepackage{mathtools}
\usepackage{hyperref}

\usepackage{color, soul}
\definecolor{gr}{rgb}{0.7, 1, 0.7}
\definecolor{rr}{rgb}{1, 0.7, 0.7}

\DeclareMathSymbol{\shortminus}{\mathbin}{AMSa}{"39}
\DeclareMathSymbol{\shortplus}{\mathbin}{AMSa}{"2B}
\DeclareMathOperator{\sgn}{sgn}

\usepackage{stackengine}

\usepackage{latexsym}
\usepackage{graphicx}

\theoremstyle{plain} 
\newtheorem{theorem}{Theorem}[section]
\newtheorem*{mainthm}{Main theorem}
\newtheorem*{FKRW}{Frechet-Kolmogorov-Riesz-Weil Compactness Theorem}
\newtheorem{lemma}[theorem]{Lemma}
\newtheorem{corollary}[theorem]{Corollary}

\newtheorem{proposition}[theorem]{Proposition}

\theoremstyle{definition} 

\theoremstyle{remark} 

\newcolumntype{C}[1]{>{\centering\arraybackslash$}p{#1}<{$}}

\renewcommand{\mathfrak}{\mathbf}

\newcommand{\ignore}[1]{}

\usepackage{bm}




\newcommand{\cE}{{\mathcal E}}
\newcommand{\cF}{{\mathcal F}}

\newcommand{\cI}{{\mathcal I}}

\newcommand{\cL}{{\mathcal L}}
\newcommand{\cM}{{\mathcal M}}
\newcommand{\cN}{{\mathcal N}}

\newcommand{\cR}{{\mathcal R}}

\newcommand{\RR}{{\mathbb R}}
\newcommand{\RRPP}{{\mathbb RP}}

\newcommand{\TT}{{\mathbb T}}

\newcommand{\NN}{{\mathbb N}}

\def\epsilon{\varepsilon}

\def\ii{\mathrm{i}}
\def\ee{\mathrm{e}}

\def\PV{\mathrm{PV}\int}

\def\dif{ {\mbox{\rm d}} }
\def\pd{ \partial }
\setcounter{tocdepth}{1}

\title[Renormalization and blow-up solutions for 1D Navier-Stokes]{Renormalization and existence of finite-time blow up solutions for a one-dimensional analogue of the Navier-Stokes equations}
\author{Denis Gaidashev}
\address{Uppsala University, Uppsala, Sweden}
\email{gaidash@math.uu.se}

\author{Alejandro Luque}
\address{Comsol, Inc., Stockholm, Sweden}
\email{alejandro.luque.math@gmail.com}
\thanks{{\bf This research was supported by Kurt and Alice Wallenberg Foundation grant $\mathbf{2015.0365}$}}

\subjclass[2010]{}
\keywords{}

\begin{document}
\begin{abstract}
  The one-dimensional quasi-geostrophic equation is the one-dimensional Fourier-space analogue of the famous Navier-Stokes equations. In \cite{LS0} Li and Sinai  have proposed a renormalization approach to the problem of existence of finite-time blow up solutions of this equation. 
  In this paper we revisit the renormalization problem for the quasi-geostrophic blow ups, prove existence of a family of renormalization fixed points, and deduce existence of real $C^\infty([0,T),C^\infty(\RR) \cap L^2(\RR))$ solutions to the quasi-geostrophic equation whose energy and enstrophy become unbounded in finite time, different from those found in \cite{LS0}.
\end{abstract}
\maketitle


\section{Introduction} Bakhtin, Dinaburg and Sinai \cite{BDS} proposed a novel approach to the question of existence of the blow-up solutions in the Navier-Stokes equations.  In their approach, a solution of an initial value problem with a certain self-similar initial conditions is ``shadowed'' by a solution of a fixed point problem for an integral nonlinear ``renormalization'' operator in an appropriate functional space. The self-similarity of the solution is of the type
\begin{equation}
\label{Lerayss} u(x,t)=(T-t)^{-{1 \over 2}} U \left((T-t)^{-{1 \over 2}} x \right).
\end{equation}
The question of existence of blow-up solutions of this form has been first addressed by J. Leray in \cite{Leray}, conditions for the triviality of such solutions have been studied in \cite{Tsai}, \cite{NRS}. In this regard, Bakhtin-Dinaburg-Sinai renormalization can be viewed as a technique of proving existence of Leray self-similar solutions through a fixed point theorem.

Later this renormalization approach was applied to several hydrodynamics models by Li and Sinai \cite{LS0,LS1,LS2,LS3}. The method of these publications is a development of that of  \cite{BDS}, with the addition that the authors derive an exact, and not an approximate, renormalization fixed point equation. 

In this paper we study existence of Leray solutions in the 1D quasi-geostrophic equation 
\begin{equation}\label{eq:modelB}
\pd_t u - (H u \cdot u)_x = u_{xx}\,,
\end{equation}
where $Hu$ is the usual Hilbert transform:
\begin{equation}\label{eq:Hilbert}
Hu(y):=\frac{1}{\pi} \PV_{-\infty}^\infty \frac{u(x)}{y-x} \dif x\,.
\end{equation}
The importance of this equation is that in the Fourier space it serves as a 1D analogue of the Navier-Stokes equations. 

We will derive an equation for the Fourier transform of certain self-similar solutions of  $(\ref{eq:modelB})$ and demonstrate that this equation reduces to a fixed point problem for a {\it renormalization} operator. We then prove existence of a family of  even and exponentially decaying renormalization fixed points, which correspond to  real  $C^\infty$-solutions of  $(\ref{eq:modelB})$ which blow up in finite time $T$.

Our approach to  the renormalization is different from that of Li and Sinai: rather than looking for solutions in the form of a power series and deriving a fixed point equation for the series' coefficients, we use considerations of compactness borrowed from the renormalization theory in dynamics to prove existence of a class of real solutions different from \cite{LS0}. 

\subsection{Outline of the proof} As usual, a solution to a Picard-Lindel\"off integral equation associated to a PDE, will be called a {\it mild} solution.

In Section  $\ref{secrenorm}$ we will demonstrate that the problem of existence of Leray mild self-similar blow ups is equivalent to a problem of existence of a common $L^2$-fixed point of a family of integral operators $\cR_\beta$ acting on functions of one variable, with  $\beta \in (0,1)$ playing the role of ``time'': $\beta^2=(T-t)/T$.

In Section $\ref{secapriori}$ this fixed point problem will be reduced to a construction of relatively compact sets $\cN_p$  in $L^p$, $p \ge 1$, invariant by all $\cR_\beta$ with $\beta \in (\beta_0,1)$  for some $\beta_0 \in (0,1)$. The sets $\cN_p$ will be constructed as the non-empty intersections of two convex sets $\cE^p_{a,k,K,A,\delta_0}$ and $\cM_{\mu,\sigma}$. The first, $\cE^p_{a,k,K,A,\delta_0}$, is the set of all $\psi \in L^p(\RR)$ satisfying the following convex conditions:

\vspace{2.1mm}

\noindent  ${\bf 1)}$  $|\psi(\eta)| \le k e^{-a|\eta|}$;

\vspace{2.1mm}

\noindent  ${\bf 2)}$  $|\psi(\eta)-\psi(\eta-\delta)| \le \omega_\psi(\eta) |\delta|^\alpha$ with $\omega_\psi>0$, $\omega_{\psi} \in L^p(\RR)$, $\| \omega_\psi \|_p \le K$ and $\omega_\psi(\eta) \le A |\eta| e^{-a |\eta|}$ whenever $|\eta| >1$.

\vspace{2.1mm}

The second set $\cM_{\mu,\sigma}$ is the set of all  $\psi$ in $L^1_u(\RR)$, where $u$ is the weight $u=|\eta|^\sigma e^{-\eta^2}$, such that the following integral is larger than  some $\mu>0$:
\begin{equation}
  \nonumber \cI[\psi]=\int_\RR \psi(\eta)|\eta|^\sigma e^{-\eta^2} \ \dif \eta.
\end{equation}

The invariance of the compact closure of the convex set  $\cN_p=\cE^p_{a,k,K,A,\delta_0} \cap \cM_{\mu,\sigma}$ under $\cR_\beta$ results, via Tikhonov fixed point theorem, in existence of fixed points $\psi_{\beta,p} \in \overline{\cN_p}$ of $\cR_\beta$ for every $\beta \in (\beta_0,1)$. At the same time, as already mentioned, existence of mild  blow ups requires existence of one and the same fixed point $\psi_p$ for all  $\cR_\beta$ with $\beta \in (0,1)$.  We show in Section $\ref{seclimits}$ that any limit along a subsequence of the fixed points $\psi_{\beta_i,p}$ with $\beta_i \in (\beta_0,1)$ is a fixed point of $\cR_\beta$ for all $\beta \in (0,1)$. Together with the fact that $\overline \cN_p \subset L^2(\RR)$, for all $p \ge 1$, this completes the proof of existence of Leray blow ups.

Additionally, in Section $\ref{secfp}$, we compute the common point $\psi_p$ as the inverse  Fourier transform of the inverse Laplace transform of an explicit expression.
\subsection{Statement of results}
The main result of this paper reads as follows:
\begin{mainthm}
There exist $\nu \in (0,1)$ such that for any $T>0$ there is a solution of equation $(\ref{eq:modelB})$  in the class $C^\infty\left([0,T), C^\infty(\RR) \bigcap \left( \bigcap_{m \ge 2} L^m(\RR) \right)\right)$, given by 
    $$u_\nu(x,t)=  (T-t)^{-{1 \over 2}} {\cF^{-1} \circ \cL^{-1}[\hat \vartheta_\nu]\left(x (T-t)^{-{1 \over 2}}   \right) },$$
 where  $\cF^{-1}$ denotes the inverse Fourier transform, and $\cL^{-1}$ - the inverse Laplace transform, and $\hat \vartheta_{\nu}$ is as a solution of the equation
    \begin{equation}
      \label{Riccati} \hat \vartheta'(s)=-\hat \vartheta(s)^2-{s \over 2} \hat \vartheta(s)+{\nu \over 2},
\end{equation}
    such that the energy $E[u_\nu](t):=\| u_\nu(\cdot,t) \|_2^2$ and the enstropy $\Omega[u_\nu](t):=\| w(\cdot, t) \|_2^2$ where $w(x,t):=x u(x,t)$, become unbounded as $t \rightarrow T$.   
\end{mainthm}

\subsection{The Navier-Stokes equations.} Recall the Navier-Stokes equations:
\begin{equation}\label{eq:NS}
\partial_t u + (u \cdot \nabla) u - \Delta u + \nabla p = 0\,, 
\qquad
\nabla \cdot u =0\,, 
\qquad
u(\cdot,0)=u_0,
\end{equation}
where  $u_0$ is the initial datum, and  $u$ and $p$ are the unknowns.

We apply the Fourier transform  $v(y)=\cF[u](y)$, 
so that the Navier-Stokes PDE becomes the following system of integro-differential equations:
\begin{equation}\label{eq:sist:FT}
\frac{\pd v(y,t)}{\pd t} = -|y|^2 v(y,t) + \ii \int_{\RR^d} (y \cdot v(y-z,t)) P_y v(z,t) \dif z \,,
\qquad
\end{equation}
where $P_y$ is the Leray projection to the subspace orthogonal to $y$, i.e.
\begin{equation}\label{eq:leray}
P_y v = v - \frac{v \cdot y}{|y|^2} y\,,
\end{equation}
indeed,  incompressibility condition $\nabla \cdot u=0$ reads $v(y,t) \cdot y =0$ in Fourier space. 
Integrating \eqref{eq:sist:FT} with respect to $t$, we obtain the integral equation:
\begin{equation}\label{eq:integral:NS}
v(y,t) = \ee^{-|y|^2 t} v_0(y) + \ii \int_0^t \int_{\RR^d} (y \cdot v(y-z,\tau)) P_y v(z,\tau)
\ee^{-|y|^2 (t-\tau)} \dif z \dif \tau\,,
\end{equation}
where $v_0=v(y,0)$ stands for the initial datum at $t=0$.

\subsection{The quasi-geostrophic equation}
Our primary focus will be  equation $(\ref{eq:modelB})$, which according to the discussion of \cite{CCCF} is a 1D version of the 2D quasi-geostrophic initial value problem:
\begin{align} 
\label{QC1} &\theta_t + (u \cdot \nabla) \theta=0, \\
\label{QC2} &u= - R^{\perp} \theta:=(-R_2 \theta, R_1 \theta),\\
\label{QC4}  & \theta(x,0)=\theta_0(x).
\end{align}
where $R_j$ is the Riesz transform:
$$R_j(\theta)(x,t)={1 \over 2 \pi} PV \int_\RR^{2} {(x_j-y_j) \theta(y,t) \over |x-y|^{3}  } \dif y.$$
This system of equations models the evolution of the temperature in a mixture of cold and hot air fronts. 

The system $(\ref{QC1})-(\ref{QC2})$ can be alternatively written as
$$\theta_t + {\rm div} [ (R^{\perp} \theta) \theta ]=0.$$
One obtains the 1D quasi-geostrophic equation by assuming that $\theta(x,t) \equiv \theta(x_1,t)$: integration of  the Riesz transform with respect to the variable $y_2$ results in a Hilbert transform of $\theta(x_1,t)$.

The model $(\ref{eq:modelB})$ has been considered in the classical references \cite{CLM,DG,Schochet,MajdaB02}.  \cite{Mats} and \cite{BLM} consider the periodic case, and proof existence of finite-time blow-up $C^{\infty}(\TT \times [0,T))$-solutions for this equation.  Li and Sinai show in \cite{LS0} that $(\ref{eq:modelB})$ admits complex-valued solutions  that blow up in finite time.

The integral equation for this model, analogous to~\eqref{eq:integral:NS}, 
is given by
\begin{equation}\label{eq:integral:ModelB}
v(y,t) = \ee^{-|y|^2 t} v_0(y) + \hspace{-0.6mm} \int_0^t \hspace{-1.2mm} \int_\RR \hspace{-1.1mm}y v(y-z,\tau) \sgn(y-z) v(z,\tau)  \ee^{-|y|^2 (t-\tau)} \dif z\dif \tau\,.
\end{equation}
The operator
\begin{equation}\label{eq:leray:d1}
  P_y v(z) = - \ii \sgn(y-z) v(z),
\end{equation}
appearing in $(\ref{eq:integral:ModelB})$ is also a projection. In Section $\ref{secrenorm}$ we will derive a renormalization operator for  a generalized version of $(\ref{eq:integral:ModelB})$ with an exponent $\gamma>1$:
\begin{equation}\label{eq:integral:NSgamma}
v(y,t) = \ee^{-|y|^\gamma t} v_0(y) + \ii \int_0^t \int_{\RR} y v(y-z,\tau) P_y v(z,\tau) \ee^{-|y|^\gamma (t-\tau)} \dif z \dif \tau\,. 
\end{equation}

\section{Renormalization problem for a self-similar solution}\label{secrenorm}
In this section we will consider an initial value problem for $(\ref{eq:integral:NSgamma})$  with a Leray initial condition, and deduce that existence of a Leray solution to $(\ref{eq:integral:NSgamma})$ is equivalent to existence  of a fixed point for a certain operator.

We will make an assumption that
\begin{equation}\label{vform}
  v(y,t)= \tau(t)^{-c} \psi(y \tau(t)),
\end{equation}
where $\psi$ is some bounded function, and $\tau(t)=(T-t)^{1/ \gamma}$.  The following is equation   $(\ref{eq:integral:NSgamma})$ for the initial value problem with the initial condition
$$v_0(y)= \tau_0^{-c} \psi(y \tau_0)=T^{-{c \over \gamma}} \psi(y T^{1 \over \gamma} ), \quad \tau_0 \equiv \tau(0)$$
{\it under the assumption that  $v(y,t)$ remains of the form $(\ref{vform})$ for $t > 0$}:
\begin{equation*}
{ \psi(y \tau(t)) \over \tau(t)^{c}} = e^{-|y|^\gamma t} { \psi(y \tau_0) \over \tau_0^{c}} + y \hspace{-0.6mm}  \int_{0}^t  \hspace{-0.6mm} \int_{0}^y   \hspace{-0.6mm}  e^{-|y|^\gamma (t-s) }  { \psi(z \tau(s))  \psi((y-z) \tau(s)) \over  \tau(s)^{2 c} } \  \dif z \ \dif s,
\end{equation*}
or, denoting the convolution $\int_0^y \psi(x) \psi(y-x) \ \dif x$ as $ \psi \bullet \psi(y)$, 
\begin{equation}
  \nonumber   {\psi(y \tau(t)) \over \tau(t)^c}  =  e^{\shortminus |y|^\gamma (\tau_0^\gamma \shortminus \tau(t)^\gamma) } {\psi(y \tau_0) \over \tau_0^{c}} \hspace{0.2mm} \shortminus \hspace{0.2mm} \gamma y \hspace{-1mm} \int_{\tau_0}^{\tau(t)}  \hspace{-4.5mm}  e^{\shortminus |y|^\gamma (\tau(s)^\gamma \shortminus \tau(t)^\gamma) }  { (\psi  \bullet \psi)(y \tau(s) )  \over  \tau(s)^{2 c+2-\gamma} }   \ \dif \tau(s).
\end{equation}
Set $\eta=y \tau_0, \quad \xi= y \tau(t), \quad \zeta=y \tau(s)$, then 
\begin{eqnarray}
  \nonumber    {\psi(\xi) \over \xi^c}&=&  e^{|\xi|^\gamma-|\eta|^\gamma} {\psi(\eta) \over \eta^{c}}+ \gamma \int_{\xi}^\eta   e^{\xi^\gamma-\zeta^\gamma}  { (\psi \bullet \psi)(\zeta)  \over  \tau(s)^{2 c+2-\gamma} y^{c}  }   \ \dif \zeta.
\end{eqnarray}
We can now make the choice of $c=\gamma-2$, after which we get
\begin{equation}
  \label{problem1}   \psi(\xi)=  e^{\xi^\gamma-\eta^\gamma} {\xi^{\gamma-2} \over \eta^{\gamma-2}} \psi(\eta)+ \gamma \  \xi^{\gamma-2} e^{\xi^\gamma} \int_{\xi}^\eta   e^{-\zeta^\gamma}  \zeta^{2-\gamma} (\psi \bullet \psi)(\zeta)   \ \dif \zeta.
\end{equation}
A function $\psi$ solving this equation for all real $\eta$ and $\xi$ such that $|\xi| \le |\eta|$, provides a solution
$$v(y,t)=  (T-t)^{2-\gamma \over \gamma} \psi\left(y (T-t)^{1 \over \gamma} \right)$$
to the initial value problem $(\ref{eq:integral:NSgamma})$  with the initial data
\begin{equation}
\label{IC}v_0(y)= T^{2-\gamma \over \gamma} \psi\left(y T^{1 \over \gamma} \right).
\end{equation}
By introducing a new time variable,
\begin{equation}
\label{beta_def}\beta=T^{-{1 \over \gamma}} (T-t)^{1 \over \gamma},
\end{equation}
equation $(\ref{problem1})$ can be written as
\begin{equation}
  \label{problem2}   \psi(\eta \beta)= \beta^{\gamma-2} \hspace{-0.6mm}  \left( \hspace{-0.5mm} e^{\beta^{\gamma} \eta^\gamma-\eta^\gamma} \psi(\eta) \hspace{-0.5mm}  +  \hspace{-0.5mm} \gamma \eta^{\gamma-2} e^{\beta^\gamma \eta^\gamma} \hspace{-2mm} \int_{\beta \eta }^\eta  \hspace{-1mm} e^{-\zeta^\gamma}  \zeta^{2-\gamma} (\psi \bullet \psi)(\zeta)   \ \dif \zeta \right)\hspace{-0.7mm},
\end{equation}
while the problem of existence of solution of $(\ref{problem2})$ can be stated as a fixed point problem for a family of operators
\begin{equation}
  \label{operator} \cR_\beta[\psi](\eta)=  \beta^{\gamma\shortminus 2}  e^{\shortminus\left({1 \over \beta^{\gamma}}\shortminus 1\right) \eta^\gamma} \hspace{-1.5mm} \psi\left({\eta \over \beta} \right)+ \gamma \eta^{\gamma\shortminus 2} e^{\eta^\gamma}  \hspace{-1.5mm} \int_{\eta }^{\eta \over \beta}   \hspace{-1.5mm}  e^{\shortminus \zeta^\gamma}  \zeta^{2\shortminus \gamma} (\psi \bullet \psi)(\zeta)   \ \dif \zeta.
\end{equation}
In the next sections we will demonstrate that in the most interesting case $\gamma=2$, the operator $(\ref{operator})$ has a fixed point $\psi_{\beta,p}$ for every $p \ge 1$ and for each $\beta \in (\beta_0,1)$ for some $\beta_0 \in (0,1)$, in a certain subset $\cN_p \subset L^p(\RR) \cap L^2(\RR)$  of measurable exponentially decaying functions. In particular, the inverse Fourier transform, $\cF^{-1}[\psi](x)=\cF[\psi](-x)$ is a well defined operator from $\cN_p$ to $L^2(\RR)$, in fact, to $C^{\infty}(\RR)$. We will eventually show that the function
\begin{equation*}
  u(x,t)= (T-t)^{ -{1 \over 2}}  \cF^{-1}[\psi_p]\left({x   (T-t)^{-{1 \over 2}} }\right)
\end{equation*}
is $C^\infty(\RR \times [0, T))$ solution of $(\ref{eq:modelB})$. For such  solution both the energy and the enstropy become unbounded in finite time. By Plancherel's theorem:
\begin{align}
  \label{energy}  E[u](t) &= \int_{\RR} |v(y,t)|^2 \dif y = {1 \over (T-t)^{1 \over 2}} \int_{\RR} \psi(\zeta)^2 \dif \zeta,  \\
\label{enstropy}  \Omega[u](t)&=  \int_{\RR^d} |y|^2 |v(y)|^2 \dif y =  {1 \over (T-t)^{3 \over 2}} \int_{\RR} |\zeta|^2 \psi(\zeta)^2 \dif \zeta.
\end{align}
The exponential decay of $\psi$ implies boundedness of the integrals above.


The operator $(\ref{operator})$ will be referred to as a {\it renormalization operator}: the equation
$$\psi_p(\eta)=\cR_\beta[\psi_p](\eta)$$
says informally that the time evolution of the initial data at a {\it later time} looked at {\it at a larger scale} is equivalent to the initial data itself.

\section{A-priori bounds renormalization fixed point}\label{secapriori}
In this section we will consider the family $\cR_\beta$ given in $(\ref{operator})$  for $\gamma=2$.

The operator $\cR_\beta$ has two ``simple'' fixed points: $\psi=0$ and $\psi=1$, corresponding to the trivial and the distributional solutions of  $(\ref{eq:modelB})$, $u(x)=0$ and $u(x)=\delta(x)$.   Additionally, $\cR_\beta$ preserve the set of even functions.

In what follows we will construct a {\it convex} $\cR_\beta$-invariant,  {\it equitight}, {\it locally equicontinous} family $\cN_p$ of {\it exponentially decaying even} functions in $L^{p}(\RR)$, $p \ge 1$. The key result that  allows us to claim precompactness of $\cN_p$, and, consequently, existence of fixed points, is the  following (see \cite{HHM}):
\begin{FKRW}
  Let $\cN_p$ be a subset of  $L^{p}(\RR)$ with $p \in [1, \infty)$, and let $\tau_\delta f$ denote the translation by $\delta$, $(\tau_\delta f)(x):=f(x-\delta)$. The subset $\cN_p$ is relatively compact iff the following properties hold:

    \vspace{1mm}
    
    \begin{itemize}
    \item[$1)$] {\it Equicontinuity}: $\lim_{|\delta| \rightarrow 0} \|\tau f - f \|_p=0$ uniformly on $\cN_p$;

      \vspace{1mm}
      
    \item[$2)$] {\it Equitightness}: $\lim_{r \rightarrow \infty} \int_{|x|>r} |f|^p =0$ uniformly on $\cN_p$.
    \end{itemize}
      
\end{FKRW}
By analogy with the renormalization theory in dynamics, existence of a renormalization-invariant precompact set will be called {\it a-priori} bounds.

We will denote for brevity:
\begin{equation} \label{J}
   J(\eta)= e^{\beta^2 \eta^2} \int_{\beta \eta}^\eta  e^{-\zeta^2} (\psi \bullet \psi)(\zeta)  \ \dif \zeta.
\end{equation}
We remark, that by Young's convolution inequality
\begin{equation}
  \label{Jbound} \left| J(\eta) \right| \le C \hspace{0.5mm}  e^{\beta^2 \eta^2} \| \psi\|_s \| \psi \|_r \left( {\rm erf}(\sqrt{q} \eta)-{\rm erf}(\sqrt{q} \beta \eta) \right)^{1\over q},
\end{equation}
whenever $\psi \in L^r(\RR) \cap L^s(\RR)$, $1/r+1/s+1/q=2$. Here, and below, $C$ serves as a stand in for a constant whose value is irrelevant to the proof. 
\begin{proposition}
  For all $\beta \in (0,1)$ and  for all $p>3/2$ the operator $\cR_\beta$ is a well-defined continuous operator of $L^p(\RR)$ into itself. 
\end{proposition}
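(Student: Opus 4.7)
The plan is to split $\cR_\beta[\psi](\eta) = L_\beta[\psi](\eta) + Q_\beta[\psi](\eta)$ into the linear and quadratic parts
$$L_\beta[\psi](\eta) = e^{-(1/\beta^2-1)\eta^2}\psi(\eta/\beta), \qquad Q_\beta[\psi](\eta) = 2 e^{\eta^2} \int_\eta^{\eta/\beta} e^{-\zeta^2}(\psi \bullet \psi)(\zeta)\,d\zeta,$$
and bound each in $L^p$. Since the Gaussian prefactor in $L_\beta$ is $\leq 1$, a direct change of variable $t = \eta/\beta$ yields $\|L_\beta[\psi]\|_p \leq \beta^{1/p}\|\psi\|_p$ for every $p \geq 1$, so the whole content of the proposition concerns $Q_\beta$.

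The first observation is that $Q_\beta[\psi](\eta) = 2 J(\eta/\beta)$ with $J$ as in (\ref{J}). For $p \in (3/2, 2]$, I feed $r = s = p$ into the Young's-convolution bound (\ref{Jbound}), which forces $q = p/(2(p-1)) \geq 1$. After substituting $\eta \to \eta/\beta$, the prefactor $e^{\beta^2(\eta/\beta)^2} = e^{\eta^2}$ cancels the leading Gaussian in the error-function difference: using the standard tail $1-\operatorname{erf}(x) \sim e^{-x^2}/(x\sqrt\pi)$ and the fact that $1/\beta^2 > 1$, one obtains $(\operatorname{erf}(\sqrt{q}\eta/\beta) - \operatorname{erf}(\sqrt{q}\eta))^{1/q} \sim e^{-\eta^2}|\eta|^{-1/q}$, and hence the pointwise bound
$$|Q_\beta[\psi](\eta)| \lesssim \|\psi\|_p^2\,|\eta|^{-1/q} = \|\psi\|_p^2\,|\eta|^{-2(p-1)/p}$$
as $|\eta| \to \infty$. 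Taking the $p$-th power gives the tail $|\eta|^{-2(p-1)}$, integrable at infinity precisely when $p > 3/2$. This is where the hypothesis enters the argument sharply. On the bounded region near $\eta = 0$ the error-function difference is itself $O(|\eta|^{1/q})$, and the resulting local contribution lies in $L^\infty_{\mathrm{loc}}$, so it causes no issue.

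For $p > 2$, Young's with $r = s = p$ is inadmissible (the conjugate exponent $q$ would drop below $1$), so I switch to a direct H\"older estimate on the truncated convolution: since $p' < p$ on the bounded interval $[0, |\zeta|]$, $|(\psi \bullet \psi)(\zeta)| \leq \|\psi \chi_{[0,|\zeta|]}\|_p\|\psi \chi_{[0,|\zeta|]}\|_{p'} \leq |\zeta|^{(p-2)/p}\|\psi\|_p^2$. A routine Laplace-type estimate on $\int_\eta^{\eta/\beta}e^{-\zeta^2}|\zeta|^{(p-2)/p}d\zeta$, dominated by the contribution near the lower endpoint $\zeta = \eta$, gives $\lesssim e^{-\eta^2}|\eta|^{-2/p}$ for $|\eta|$ large. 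Consequently $|Q_\beta[\psi](\eta)| \lesssim \|\psi\|_p^2\,|\eta|^{-2/p}$, whose $p$-th power $|\eta|^{-2}$ is always integrable at infinity, so the range $p > 2$ imposes no further restriction.

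Continuity of $\cR_\beta$ follows by polarization: $\bullet$ is symmetric and bilinear, so $\psi_1 \bullet \psi_1 - \psi_2 \bullet \psi_2 = (\psi_1 - \psi_2) \bullet (\psi_1 + \psi_2)$, and the same estimates applied to the bilinear map $(\phi,\varphi) \mapsto 2 e^{\eta^2}\int_\eta^{\eta/\beta}e^{-\zeta^2}(\phi \bullet \varphi)(\zeta)\,d\zeta$ yield $\|Q_\beta[\psi_1] - Q_\beta[\psi_2]\|_p \lesssim (\|\psi_1\|_p + \|\psi_2\|_p)\|\psi_1 - \psi_2\|_p$. The main obstacle is the borderline case $p \in (3/2, 2]$: here the pointwise decay rate $|\eta|^{-2(p-1)/p}$ is only marginally enough for $L^p$-integrability, which makes it essential to extract the two Gaussian factors in (\ref{Jbound}) carefully and in the right order; the cases $p > 2$ are more forgiving.
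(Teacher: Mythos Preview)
Your argument is correct and, for $p\in(3/2,2]$, follows the same line as the paper: split off the linear piece, bound the quadratic piece via Young's inequality with $r=s=p$ (so $q=p/(2(p-1))$) together with an error-function estimate. The difference is mainly one of packaging. Where you invoke the tail asymptotic $1-\operatorname{erf}(x)\sim e^{-x^2}/(x\sqrt{\pi})$ and then split into large-$|\eta|$ and small-$|\eta|$ regimes, the paper instead uses the global elementary bound
\[
\operatorname{erf}(\sqrt{q}\,b)-\operatorname{erf}(\sqrt{q}\,a)\le \frac{1}{\sqrt{q}\,a}\bigl(e^{-qa^2}-e^{-qab}\bigr),
\]
which after a single change of variable collapses $\|J\|_p^p$ to a constant times $\int_{\RR}\bigl((1-e^{-x^2})/x\bigr)^{p/q}\,dx$; convergence of this integral is then exactly the condition $p>q$, i.e.\ $p>3/2$. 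The paper's route is slightly cleaner in that it avoids any asymptotic-versus-local splitting, but the content is the same.

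Your treatment of $p>2$ is a genuine addition. The paper applies the Young's bound \eqref{Jbound} with $r=s=p$ without comment, but that bound comes from Young plus H\"older with conjugate exponent $q$, and needs $q\ge 1$, i.e.\ $p\le 2$. You noticed this and supplied a direct H\"older estimate on the truncated convolution, $|(\psi\bullet\psi)(\zeta)|\le |\zeta|^{(p-2)/p}\|\psi\|_p^2$, followed by a Laplace-type bound on the $\zeta$-integral; this gives the tail $|\eta|^{-2/p}$ whose $p$-th power is always integrable. So for the range $p>2$ your proof is actually more complete than the paper's, while for $p\in(3/2,2]$ the two arguments are equivalent up to how the error-function tail is handled.
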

\begin{proof}
  Let $s=r=p$ in $(\ref{Jbound})$, i.e. $q=p/(2 p-2)$.  Notice, that
  \begin{equation*}
    {\rm erf}(\sqrt{s} b)\hspace{-0.2mm}-\hspace{-0.2mm}{\rm erf}(\sqrt{s} a)=  \hspace{-0.5mm} \int_{\sqrt{s} a}^{\sqrt{s} b} \hspace{-2.5mm}  e^{-t^2}  \hspace{0.5mm} \dif t \le  \hspace{-0.5mm}  \int_{\sqrt{s}a}^{\sqrt{s}b}  \hspace{-2.5mm} e^{-\sqrt{s} a t}  \hspace{0.5mm} \dif t=  \hspace{-0.5mm} {1 \over \sqrt{s} a }  \hspace{-0.5mm} \left( e^{-s a^2}  \hspace{-0.5mm} -\hspace{-0.1mm} e^{-s ab} \right) \hspace{-0.6mm}.
  \end{equation*} 
Therefore, we get, using $(\ref{Jbound})$:
\begin{align}
  \nonumber \| J \|_p^p & \le  C \| \psi\|_p^{2 p} \int_\RR  \left( e^{q \beta^2 \eta^2}( {\rm erf}(\sqrt{q} \eta)-{\rm erf}(\sqrt{q} \beta \eta) )\right)^{p\over q} \ \dif \eta \\
  \nonumber & \le   C \| \psi\|_p^{2 p} \int_\RR  \left({ 1 -  e^{-q (1-\beta) \beta \eta^2}    \over \eta }\right)^{p\over q} \ \dif \eta \\
  \nonumber & \le   C \left( 1-\beta  \right)^{ p - q \over 2 q}  \| \psi\|_p^{2 p} \int_\RR  \left({ 1 -  e^{-x^2}    \over x }\right)^{p\over q} \ \dif x.
\end{align}
The last integral converges for all $p>q=p/(2 p-2)$. The claim follows.
\end{proof}
We will now construct a renormalization invariant subset in  $L^p(\RR)$.
\begin{proposition}\label{bounds}
  For any $a>0$ the subset $\cE_{a,k} \subset \cap_{p>0} L^p(\RR)$ of exponentially decaying even functions,
  \begin{equation}
\label{cEa}    \cE_{a,k}=\left\{f - {\rm measurable \ and \ even \ on } \ \RR: \ |f(x)| \le  k e^{-a |x|} \right\}, 
  \end{equation}
is invariant under $\cR_\beta$ for all $\beta \in (0,1)$.
\end{proposition}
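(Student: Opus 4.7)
The plan is to verify that $\cR_\beta$ preserves both the evenness and the pointwise bound $|\psi(\eta)| \le k e^{-a|\eta|}$ defining $\cE_{a,k}$. I would split $\cR_\beta[\psi] = T_1[\psi] + T_2[\psi]$, where
$$T_1[\psi](\eta) = e^{-(1/\beta^2-1)\eta^2}\psi(\eta/\beta), \qquad T_2[\psi](\eta) = 2 e^{\eta^2}\int_\eta^{\eta/\beta} e^{-\zeta^2}(\psi\bullet\psi)(\zeta)\,\dif\zeta,$$
and treat the linear and quadratic pieces separately.

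Evenness is direct. $T_1[\psi]$ is visibly even, and for $T_2$ the key observation is that if $\psi$ is even then $(\psi\bullet\psi)$ is odd (substitute $x\mapsto -x$ in $\int_0^{-y}\psi(x)\psi(-y-x)\,\dif x$ and use $\psi(-u)=\psi(u)$). Combined with the change of variable $\zeta \mapsto -\zeta$, this flips the sign of the integrand once while also reversing the orientation of the integration interval, so $T_2[\psi](-\eta) = T_2[\psi](\eta)$.

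For the pointwise bound, reduce to $\eta > 0$ by evenness. The linear term satisfies
$$|T_1[\psi](\eta)| \le k\, e^{-a\eta}\cdot e^{-(1/\beta^2-1)\eta^2 - a\eta(1/\beta-1)} \le k e^{-a\eta},$$
since both exponents in the second factor are non-positive for $\beta \in (0,1)$. For the quadratic term, first bound the convolution: for $\zeta > 0$ and $x \in [0,\zeta]$ we have $|\psi(x)\psi(\zeta - x)| \le k^2 e^{-a\zeta}$, hence $|(\psi\bullet\psi)(\zeta)| \le k^2\zeta e^{-a\zeta}$. The key technical step is extracting an $e^{-a\eta}$ factor from the Gaussian $e^{\eta^2 - \zeta^2}$ on $[\eta, \eta/\beta]$; I would apply the convexity estimate $\zeta^2 - \eta^2 \ge 2\eta(\zeta - \eta)$ on this interval and substitute $u = \zeta - \eta$, bounding what remains by $\int_0^\infty e^{-(2\eta+a)u}(u+\eta)\,\dif u = (2\eta+a)^{-2} + \eta/(2\eta+a)$, which is uniformly bounded in $\eta \ge 0$.

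The main obstacle is closing the estimate with the same constant $k$ on both sides: because $T_2$ is quadratic in $\psi$, the convolution step naturally produces a factor $k^2$ rather than $k$. Combining the two bounds gives an inequality of the form $|\cR_\beta[\psi](\eta)| \le k e^{-a|\eta|}\bigl(A(\eta,\beta) + k\, B(\eta, \beta, a)\bigr)$, where $A \le 1$ with strict inequality away from $\eta = 0$ and $B$ is bounded. The careful point, which I would verify explicitly, is that $A(\eta, \beta) + k\, B(\eta, \beta, a) \le 1$ uniformly in $\eta\ge 0$ and $\beta \in (0,1)$; this is the condition that pins down the admissible values of $k$ for which the proposition actually holds.
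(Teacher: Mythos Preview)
Your structure is right, and the evenness argument is fine. The gap is at the closing step, and it is real: the specific bound you write for $T_2$ is too crude to yield $A+kB\le 1$ for any $k>0$. After the convexity estimate and extension to $[0,\infty)$ you get
\[
kB(\eta)\;\le\;2k\Bigl(\frac{1}{(2\eta+a)^2}+\frac{\eta}{2\eta+a}\Bigr),
\]
which tends to $2k/a^2>0$ as $\eta\to 0$, while $A(\eta,\beta)\to 1$. Hence $A+kB\to 1+2k/a^2>1$, and the inequality fails near $\eta=0$ regardless of how small $k$ is. The quantities you discarded --- the finite integration range $[\eta,\eta/\beta]$ and the full Gaussian weight --- are exactly what forces $kB\to 0$ at the origin, so you cannot throw them away.

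The repair, which is precisely the paper's argument, is to keep both terms as integrals over the \emph{same} interval and compare integrands. Write the linear defect via the fundamental theorem of calculus:
\[
1-A(\eta,\beta)\;=\;\int_\eta^{\eta/\beta}(2\zeta+a)\,e^{\eta^2-\zeta^2-a(\zeta-\eta)}\,\dif\zeta,
\]
and leave the quadratic contribution in its exact form
\[
kB(\eta,\beta)\;=\;2k\int_\eta^{\eta/\beta}\zeta\,e^{\eta^2-\zeta^2-a(\zeta-\eta)}\,\dif\zeta.
\]
Subtracting gives
\[
1-A-kB\;=\;\int_\eta^{\eta/\beta}\bigl[2(1-k)\zeta+a\bigr]\,e^{\eta^2-\zeta^2-a(\zeta-\eta)}\,\dif\zeta,
\]
which is non-negative for all $\eta\ge 0$ and $\beta\in(0,1)$ if and only if $k\le 1$. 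This is the sharp condition and it closes the bound without any convexity estimate or extension of the range. (In the paper this computation is carried out after the substitution $\phi(\eta)=e^{-\eta^2}\psi(\eta)$, which simply absorbs the prefactor $e^{\eta^2}$ in $T_2$; the content is identical.)
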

\begin{proof}
It is straightforward to check that $\cR_\beta$ maps even functions to even functions. Set
  $$\phi(\eta)=k e^{-\eta^2} \psi(\eta).$$
The operator $\cR_\beta$ acts on $\phi$ as follows:
\begin{equation}
\label{operator3}  \tilde \cR_\beta[\phi](\eta)=\phi\left({\eta \over \beta} \right)+ 2 \  \int_{\eta }^{\eta \over \beta}    \int_0^\zeta \phi(z) \phi(\zeta-z) e^{- 2z(\zeta-z)}  d z  \ \dif \zeta.
\end{equation}
Consider an upper bound $k e^{-a \eta}$ for non-negative $\eta$:
\begin{align*}
   \tilde \cR_\beta[\phi](\eta)& \le k e^{-{\eta^2 \over \beta^2}-a {\eta \over \beta}}+ 2 k^2 \  \int_{\eta }^{\eta \over \beta}    \int_0^\zeta  e^{-z^2-a z}  e^{-(\zeta-z)^2-a (\zeta-z)} e^{-2z(\zeta-z)}  d z  \ \dif \zeta \\
   & = k  e^{-{\eta^2 \over \beta^2}-a {\eta \over \beta}}+ 2 k^2 \  \int_{\eta }^{\eta \over \beta}   e^{-\zeta^2-a \zeta} \zeta \ \dif \zeta,
 \end{align*}  
and  $ \tilde \cR_\beta[\phi](\eta) \le   k e^{-\eta^2-a\eta }$  for non-negative $\eta$ if
\begin{align*}  
   \tilde \cR_\beta[\phi](\eta) -   k e^{-\eta^2-a\eta }  &=    k \int_{\eta }^{\eta \over \beta} \hspace{-0.5mm}  (-2 \zeta -a)  e^{-{\zeta^2 }-a {\zeta}} + 2 k  \zeta   e^{-\zeta^2-a \zeta}  \  \dif \zeta
\end{align*}  
is less or equal to $0$. A sufficient condition for this to be non-positive for all $\beta \in (0,1)$ is the non-positivity of the integrand, i.e. $k \le1$.

The lower bound $f(x) \ge  -k e^{-a |x|}$ is proved in a similar way.
\end{proof}
The set $\cE_{a,k}$ contains the trivial function. We will now introduce an extra condition which will define a convex subset of $\cE_{a,k}$ that does not contain $0$.

Consider the weighted space $L^1_{u}(\RR)$ where $u$ is some weight.
\begin{lemma}
  For every $\sigma>-3$ and $\beta \in (0,1)$, the space $L^1_u(\RR)$ with $u(\eta)=|\eta|^\sigma e^{-|\eta|^2}$ is $\cR_\beta$ invariant. Additionally, for every $0<\beta_0<1$  and $-3<\sigma<-1$ there exists $\mu_0>0$,
  \begin{equation}
    \label{mu}  \mu_0 = {k^2 \over \beta_0^2} {1 - \beta_0^2 \over \beta_0^{1+\sigma}- 1}   \left( \hspace{-0.5mm}  \Gamma(s_1)   \phantom{}_1F_1 \left( \hspace{-0.5mm}s_1, {1 \over 2}, {a^2 \over 4}  \right) -  a   \Gamma(s_2)  \phantom{}_1F_1 \left( \hspace{-0.5mm}s_2, {3 \over 2}, {a^2 \over 4}  \right)  \hspace{-0.5mm} \right) \hspace{-0.5mm},
  \end{equation}
where $s_1=(\sigma+3)/2$ and $s_2=(\sigma+4)/2$, such that for all $\beta \in (\beta_0,1)$  and all $\mu >\mu_0$ the convex set $\cE_{a,k} \cap  \cM_{\mu,\sigma}$,
  \begin{align}
    \label{cMmua} \cM_{\mu,\sigma}:&=\{\psi \in L^1_u(\RR): \cI[\psi] \ge \mu \}, \hspace{1.5mm} {\rm where} \hspace{2.0mm} \\
    \nonumber \cI[\psi]:&=  \int_\RR \psi(\eta) e^{-|\eta|^2} |\eta|^\sigma  \ \dif \eta,
  \end{align}
  is $\cR_\beta$ invariant.   
\end{lemma}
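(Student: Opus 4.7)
The plan is to verify both statements by direct computation of $\cI[\cR_\beta\psi]$, after splitting $\cR_\beta[\psi]$ into its linear part $T_1(\eta) = e^{-(\beta^{-2}-1)\eta^2}\psi(\eta/\beta)$ and its convolution part $T_2(\eta) = 2 e^{\eta^2}\int_\eta^{\eta/\beta}e^{-\zeta^2}(\psi\bullet\psi)(\zeta)\,d\zeta$. Since $\cR_\beta$ preserves evenness (already observed in Proposition~\ref{bounds}), I would reduce all integrals to $\eta>0$ and double. Convexity of $\cE_{a,k}\cap\cM_{\mu,\sigma}$ is automatic: the first set is cut out by a pointwise modulus inequality and the second by the single linear functional $\cI$.

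For the $T_1$ contribution, the weight $|\eta|^\sigma e^{-\eta^2}$ combines with the prefactor into $|\eta|^\sigma e^{-\eta^2/\beta^2}$, and the substitution $\eta=\beta x$ yields $\cI[T_1]=\beta^{\sigma+1}\cI[\psi]$. Since $\sigma+1<0$ and $\beta<1$, the multiplier $\beta^{\sigma+1}>1$, so $T_1$ already inflates $\cI[\psi]$; the remaining task is to bound how much $T_2$ can deflate it. Applying Fubini to $\cI[T_2]$ (the region $\eta\le\zeta\le\eta/\beta$, $\eta>0$ equals $\beta\zeta\le\eta\le\zeta$, $\zeta>0$), the inner integral $\int_{\beta\zeta}^\zeta \eta^\sigma d\eta = \zeta^{\sigma+1}(1-\beta^{\sigma+1})/(\sigma+1)$ has positive coefficient for $\sigma<-1$, so
\[
\cI[T_2] = \frac{4(1-\beta^{\sigma+1})}{\sigma+1}\int_0^\infty \zeta^{\sigma+1}e^{-\zeta^2}(\psi\bullet\psi)(\zeta)\,d\zeta.
\]
The pointwise bound $|(\psi\bullet\psi)(\zeta)|\le k^2\zeta e^{-a\zeta}$, immediate from $|\psi|\le ke^{-a|\eta|}$ and the definition of $\bullet$, reduces everything to the weighted Gaussian moment $\int_0^\infty \zeta^{\sigma+2}e^{-\zeta^2-a\zeta}d\zeta$, which converges exactly for $\sigma>-3$ (this single observation is the full content of the invariance claim). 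Expanding $e^{-a\zeta}$ in a Taylor series, integrating termwise, and separating even and odd indices via the identities $(2k)!=4^k k!(1/2)_k$ and $(2k+1)!=2\cdot 4^k k!(3/2)_k$ identifies this integral as $\tfrac{1}{2}\bigl[\Gamma(s_1)\,{}_1F_1(s_1,\tfrac12,\tfrac{a^2}{4})-a\,\Gamma(s_2)\,{}_1F_1(s_2,\tfrac32,\tfrac{a^2}{4})\bigr]$.

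Denoting that bracket by $2B$, the combined estimate is $\cI[\cR_\beta\psi]\ge \beta^{\sigma+1}\cI[\psi] - \frac{4(\beta^{\sigma+1}-1)}{|\sigma+1|}k^2 B$. Imposing this to be at least $\mu$ for every $\beta\in(\beta_0,1)$ and using $\cI[\psi]\ge\mu$, the factor $\beta^{\sigma+1}-1>0$ cancels; a uniform-over-$\beta\in(\beta_0,1)$ bound on the leftover $\beta$-ratio casts the threshold into the stated form with prefactor $(1-\beta_0^2)/(\beta_0^2(\beta_0^{\sigma+1}-1))$. The $L^1_u$-invariance portion for arbitrary $\sigma>-3$ follows from the same splitting, bounding $\|T_1\|_{L^1_u}$ and $\|T_2\|_{L^1_u}$ in absolute value rather than tracking signs. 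The main obstacle I foresee is sign bookkeeping: with $\sigma+1<0$ and $\beta<1$ both $1-\beta^{\sigma+1}$ and $\sigma+1$ are negative, so one must carefully track where minus signs get absorbed when passing to absolute values, and the extraction of the exact hypergeometric combination requires matching Pochhammer normalizations precisely.
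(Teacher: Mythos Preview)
Your plan follows the paper's proof closely: the same splitting $\cR_\beta[\psi]=T_1+T_2$, the same change of variables $\eta=\beta x$ giving $\cI[T_1]=\beta^{\sigma+1}\cI[\psi]$, the same pointwise bound $|(\psi\bullet\psi)(\zeta)|\le k^2\zeta e^{-a\zeta}$, and the same identification of $\int_0^\infty \zeta^{\sigma+2}e^{-\zeta^2-a\zeta}\,d\zeta$ with the hypergeometric combination (your Pochhammer manipulation is correct). The one substantive difference is your treatment of the double integral in $\cI[T_2]$: you apply Fubini and integrate $\eta^\sigma$ exactly over $[\beta\zeta,\zeta]$, while the paper instead bounds the inner $\zeta$-integral pointwise by $e^{-\eta^2-a\eta}\int_\eta^{\eta/\beta}\zeta\,d\zeta=\tfrac12(\beta^{-2}-1)\eta^2 e^{-\eta^2-a\eta}$, which is what produces the factor $(1-\beta^2)/\beta^2$ in the stated $\mu_0$.

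Your Fubini route is cleaner and actually sharper: after the cancellation you correctly note, the sufficient condition reads $\mu(\beta^{\sigma+1}-1)\ge \dfrac{4k^2A}{|\sigma+1|}(\beta^{\sigma+1}-1)$, hence simply $\mu\ge \dfrac{4k^2A}{|\sigma+1|}$ with \emph{no} residual $\beta$-dependence. So your final sentence is where the proposal slips: there is no ``leftover $\beta$-ratio'' to bound, and your method does not reproduce the prefactor $(1-\beta_0^2)/\bigl(\beta_0^2(\beta_0^{\sigma+1}-1)\bigr)$ in the stated $\mu_0$; it gives the $\beta$-independent constant $4k^2A/|\sigma+1|$ instead (which, incidentally, is the $\beta_0\to1$ limit of the paper's $\mu_0$ and dominates it from below). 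This is not a gap in the argument---your threshold is strictly better and therefore the invariance claim for the stated $\mu_0$ follows \emph{a fortiori}---but you should not claim to recover the displayed formula \eqref{mu} from Fubini. If you want that exact expression you must estimate the inner integral the paper's way.
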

\begin{proof}
  Consider an upper bound on the $L^1_u$-norm of $\cR_\beta[\psi](\eta)$:
  \begin{align}
    \nonumber \left\| \cR_\beta[\psi] \right\|_u  & \le  \int_\RR  e^{-{|\eta|^2 \over \beta^2}} \psi\left({\eta \over \beta}  \right) |\eta|^\sigma \ \dif \eta + 2 \int_\RR \left|  \int_\eta^{\eta \over \beta}  e^{-\zeta^2} (\psi \bullet \psi)(\zeta) \ \dif \zeta \right| |\eta|^\sigma \ \dif \eta \\
    \nonumber  &\le  \beta^{\sigma+1} \| \psi \|_u + 2  k^2  \int_\RR \left| \int_\eta^{\eta \over \beta} e^{-\zeta^2-a |\zeta|} \zeta \ \dif \zeta \right|  |\eta|^\sigma \ \dif \eta \\
            \nonumber   & \le  \beta^{\sigma+1} \| \psi \|_u + {2 k^2}  \left({1 \over \beta^2} -1  \right)   \int_0^\infty  e^{-\eta^2 -a \eta}  \eta^{2+\sigma} \ \dif \eta.
  \end{align}
A sufficient condition for the last integral to converge is $\sigma >-3$.  Moreover,
  \begin{equation*}
    \cI[ \cR_\beta[\psi]]  \ge  \beta^{\sigma+1} \cI[ \psi ] - {2 A k^2 \over \beta^2} \left(1 -\beta^2 \right),
  \end{equation*}
  where
\begin{equation*}
  A={1 \over 2} \left( \Gamma \left({\sigma+3 \over 2} \right) \phantom{}_1F_1 \left({\sigma+3 \over 2}, {1 \over 2}, {a^2 \over 4}  \right)  -a  \Gamma \left({\sigma+4 \over 2} \right)  \phantom{}_1F_1 \left({\sigma+4 \over 2}, {3 \over 2}, {a^2 \over 4}  \right) \right).
  \end{equation*}
  A sufficient condition for $\cI[\cR_\beta[\psi]] \ge  \mu$ whenever  $\cI[\psi] \ge  \mu$ is $\sigma<-1$ and 
\begin{equation}
\label{mu1} \beta^{\sigma+1} \mu - {2 A k^2 \over \beta^2} \left(1 -\beta^2 \right) \ge \mu \iff \mu  \ge {2 A \over \beta^{\sigma+3}} {1-\beta^2 \over 1-\beta^{-1-\sigma}  }.
\end{equation}
Since the function $\beta^{\nu-2}(1-\beta^2)/(1-\beta^\nu)$ is decreasing on $(\beta_0,1)$ for  $0<\nu<2$, we get that $\mu \ge \mu_0$,  with $\mu_0$ as in $(\ref{mu})$, is a sufficient condition for $(\ref{mu1})$.
\end{proof}


  We would now like to show that  for any fixed $0<k<1$ there is a choice of $a>0$,  $0<\beta_0<1$ and $\sigma$ such that the convex sets $\cE_{a,k}$ and $\cM_{\mu,\sigma}$, defined in $(\ref{cEa})$ and $(\ref{cMmua})$,   have a non-empty intersection.


\begin{lemma} \label{nonempty}
   For every $-3<\sigma<-1$, $0<k<1$ and  $\beta_0 \in (0,1)$  there exists $a_0>0$ such that for any $a>a_0$ the sets $\cE_{a,k}$ and $\cM_{\mu,\sigma}$ have a non-empty intersection.
\end{lemma}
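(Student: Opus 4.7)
The plan is to exhibit an explicit function in the intersection. A natural candidate that saturates the exponential envelope defining $\cE_{a,k}$ while sidestepping the non-integrable singularity of $|\eta|^\sigma$ at the origin is
\begin{equation*}
\psi_a(\eta) := k\, e^{-a|\eta|}\, \mathbf{1}_{\{|\eta|\ge 1/a\}}(\eta).
\end{equation*}
It is even and obeys $|\psi_a(\eta)|\le k e^{-a|\eta|}$ pointwise, so $\psi_a\in\cE_{a,k}$. Since $\sigma<0$, we have $|\eta|^\sigma\le a^{-\sigma}$ on the support of $\psi_a$, hence
\begin{equation*}
\int_\RR|\psi_a(\eta)|\,|\eta|^\sigma e^{-\eta^2}\,d\eta \le 2k\,a^{-\sigma}\int_{1/a}^\infty e^{-a\eta}\,d\eta = 2k\,e^{-1}\,a^{-\sigma-1}<\infty,
\end{equation*}
so $\psi_a\in L^1_u(\RR)$.

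The heart of the argument is to compare $\cI[\psi_a]$ with the threshold $\mu_0$ in \eqref{mu}. After substituting $u=a\eta$,
\begin{equation*}
\cI[\psi_a] = 2k\int_{1/a}^\infty \eta^\sigma e^{-a\eta-\eta^2}\,d\eta = 2k\,a^{-\sigma-1}\int_1^\infty u^\sigma e^{-u-u^2/a^2}\,du,
\end{equation*}
and monotone convergence as $a\to\infty$ gives $\cI[\psi_a]\sim 2k\,a^{-\sigma-1}\int_1^\infty u^\sigma e^{-u}\,du$, which diverges because $-\sigma-1>0$. On the other hand, the combination of confluent hypergeometric functions in \eqref{mu} equals $2\int_0^\infty \eta^{\sigma+2}e^{-a\eta-\eta^2}\,d\eta$ (a classical identity for parabolic cylinder functions; it is also exactly the integral estimated one step earlier in the proof of the preceding lemma, before being rewritten in closed form). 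Dropping the trivial factor $e^{-\eta^2}\le 1$ then yields
\begin{equation*}
\mu_0 \le C(\beta_0,k,\sigma)\int_0^\infty \eta^{\sigma+2} e^{-a\eta}\,d\eta = C(\beta_0,k,\sigma)\,\frac{\Gamma(\sigma+3)}{a^{\sigma+3}},
\end{equation*}
which tends to $0$ since $\sigma+3>0$. Combining the two asymptotics, $\cI[\psi_a]/\mu_0\gtrsim a^2\to\infty$, so any $a_0$ beyond which this ratio exceeds $1$ will do: for every $a>a_0$, $\psi_a\in\cE_{a,k}\cap\cM_{\mu,\sigma}$.

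The main obstacle is obtaining the asymptotic $\mu_0=O(a^{-\sigma-3})$. A frontal attack via Kummer-type asymptotic expansions of the two $_1F_1$'s in \eqref{mu} is treacherous, because the two exponentially large leading pieces of order $\sqrt\pi\,e^{a^2/4}(a/2)^{\sigma+2}$ cancel identically to all orders in $1/a$---a consequence of $s_2=s_1+\tfrac12$---and only a polynomially decaying remainder survives. Recognizing the combination as the elementary integral $\int_0^\infty \eta^{\sigma+2}e^{-a\eta-\eta^2}\,d\eta$ cleanly circumvents this cancellation and makes the polynomial decay transparent.
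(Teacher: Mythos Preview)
Your argument is correct. Both you and the paper reduce the lemma to showing that some explicit $\phi\in\cE_{a,k}$ satisfies $\cI[\phi]>\mu_0$ for all large $a$, but the details differ in two respects. First, the paper takes the continuous test function $\phi(\eta)=k\min\{1,|\eta|^\nu\}e^{-a|\eta|}$ with $-1-\sigma<\nu<2$, obtaining $\cI[\phi]\asymp k\,a^{-(\sigma+\nu+1)}$; your indicator cutoff at $|\eta|=1/a$ is simpler and gives the sharper growth $\cI[\psi_a]\asymp k\,a^{-(\sigma+1)}$, which is the limiting case $\nu\to 0$ that the paper's construction cannot reach (it needs $\nu>-1-\sigma>0$ for integrability of the continuous taper). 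Second, and more substantively, you obtain the decay $\mu_0=O(a^{-(\sigma+3)})$ by recognising the hypergeometric bracket in \eqref{mu} as $2\int_0^\infty\eta^{\sigma+2}e^{-a\eta-\eta^2}\,d\eta$ (which is exactly how it arose in the preceding lemma) and then bounding the integral directly; the paper instead expands both ${}_1F_1$ terms via their large-argument asymptotics and tracks the cancellation of the $e^{a^2/4}$-sized leading pieces. Your route is considerably cleaner here.

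One caveat worth noting: the paper's choice of a \emph{continuous} test function is not accidental. The same $\phi$ is recycled in Lemma~\ref{nonempty2} to show that the smaller H\"older-type set $\cE^p_{a,k,K,A,\alpha,\delta_0}\cap\cM_{\mu,\sigma}$ is non-empty, and that step requires a modulus-of-continuity estimate. Your $\psi_a$, having a jump at $|\eta|=1/a$, would not serve that later purpose. For the present lemma, where only measurability and the exponential envelope are demanded, your choice is entirely adequate.
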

\begin{proof}
  Consider the integral $\cI[\phi]$ for $\phi(\eta)=k \min\{1,|\eta|^\nu\} e^{- a |\eta|}  \in \cE_{a,k}$, $\nu>-1-\sigma>0$:
  \begin{align}
    \nonumber \cI[\phi] &= 2 k \int_0^1  e^{- a \eta}  e^{-\eta^2} \eta^{\nu+\sigma} \ \dif \eta +  2 k \int_1^\infty  e^{- a \eta} e^{-\eta^2}  \eta^{\sigma} \ \dif \eta \\
    \nonumber  & \ge  2 k e^{-1} \int_0^1  e^{- a \eta}  \eta^{\nu+\sigma} \ \dif \eta +  2 k \int_1^a  e^{- 2 a \eta}  \eta^{\sigma} \ \dif \eta +  2 k \int_a^\infty  e^{- 2 \eta^2}  \eta^{\sigma} \ \dif \eta \\
    \label{Ipsi1}  & \ge  2 k {\gamma(\sigma\hspace{-0.2mm}  + \hspace{-0.2mm}  \nu \hspace{-0.2mm}  + \hspace{-0.2mm}  1  ,a) \over  e a^{\sigma+\nu+1} } +  { k \over 2^{\sigma} a^{1+\sigma} }  \hspace{-0.2mm} \gamma(1\hspace{-0.2mm}  +\hspace{-0.2mm}  \sigma, 2 \eta)\vert_a^{a^2}+ { k \over 2^{{\sigma +1 \over 2}} }\Gamma \left( \hspace{-0.2mm} {\sigma  \hspace{-0.2mm} +  \hspace{-0.2mm} 1\over 2},2 a^2   \hspace{-0.2mm} \right) \hspace{-0.5mm}.
  \end{align}
  The asymptotics of this expression as $a \rightarrow \infty$ is
\begin{equation}
  \label{Ipsi2} C k \left(  {1  \over a^{\sigma+\nu+1} } +  {  e^{-2 a} \over 2^{\sigma} a }  \left(1 -e^{-2 a (a-1)} a^{ \sigma}  \right) + { e^{-2 a^2} a^{\sigma-1} \over 2^{{\sigma +1 \over 2}} }  \right) \asymp  {C k  \over a^{\sigma+\nu+1} }.
\end{equation}
We would like to compare $(\ref{Ipsi2})$ to the  asymptote of  $(\ref{mu})$. The confluent hypergeometric function has the following expansion for $x \rightarrow \infty$:
\begin{equation*}
  \phantom{1}_1F_1(u,v,x)={\Gamma(v) \over \Gamma(v \hspace{0.0mm} - \hspace{0.0mm}  u)} {1 \hspace{0.0mm} + \hspace{0.0mm}  O\left({1 \over x} \right) \over (-x)^{u}} \hspace{-0.0mm} + {\Gamma(v) \over \Gamma(u)  } {e^x \over  x^{v \shortminus u}} \hspace{0.0mm} \sum_{k=0}^\infty {\Gamma(v\hspace{0.0mm}-\hspace{0.0mm}u\hspace{0.0mm}+\hspace{0.0mm}k) \Gamma(1\hspace{0.0mm}-\hspace{0.0mm}u\hspace{0.0mm}+\hspace{0.0mm}k) \over \Gamma(v\hspace{0.0mm}-\hspace{0.0mm}u) \Gamma(1\hspace{0.0mm}-\hspace{-0.0mm}u) \ k!  \ x^{k}}.
\end{equation*}
Using this expansion, we obtain that the parts of the terms in  $(\ref{mu})$ proportional to ${\rm exp}(a^2/4)$ cancel, and the lower bound  $\mu_0$ becomes 
\begin{equation}
  \label{mu2} \mu_0 \asymp {k^2 \over a^{\sigma+3}}.
\end{equation}
Therefore, if $\sigma+3>\nu+\sigma+1 \iff \nu<2$ then for any fixed $k$ the bound $(\ref{mu2})$ decreases faster as $a \rightarrow \infty$ than the lower bound  $(\ref{Ipsi2})$ on $\cI[\psi]$, and the conclusion follows.
\end{proof}
We continue with the second and the third ingredient of our {\it a-priori} bounds: {\it equitightness} and {\it equicontinuity} in the sense of $L^p(\RR)$ spaces.

Since $\cE_{a,k} \subset L^p(\RR)$ for all $p > 0$, we see that the exponentially decaying  family $\cE_{a,k}$ is {\it equitight}:
\begin{corollary} \label{equitight}
  The set $\cE_{a,k}$ is equitight for all $p >0$, i. e., for all $f \in \cE_{a,k}$
  \begin{equation}
\lim_{r \rightarrow \infty} \int_{|x|>r} |f|^p=0
  \end{equation}
uniformly on $\cE_{a,k}$.
\end{corollary}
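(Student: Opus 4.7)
The plan is to observe that the uniform pointwise exponential decay built into the definition of $\cE_{a,k}$ immediately yields a uniform integrable majorant, so equitightness follows without any delicate argument. Specifically, for every $f \in \cE_{a,k}$ and every $x \in \RR$ we have $|f(x)|^p \le k^p e^{-a p |x|}$, and this majorant is the same for every element of the family.

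Given this, I would integrate the majorant over $\{|x|>r\}$, obtaining
\begin{equation*}
\int_{|x|>r} |f(x)|^p \, \dif x \;\le\; k^p \int_{|x|>r} e^{-a p |x|} \, \dif x \;=\; \frac{2 k^p}{a p}\, e^{-a p r}.
\end{equation*}
Since the right-hand side is independent of $f$ and tends to zero as $r \to \infty$ (using $a>0$ and $p>0$), this supplies the uniform limit required by the definition of equitightness.

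Since the entire argument consists of applying the pointwise bound in the definition of $\cE_{a,k}$ and performing a one-line integration of an exponential, there is no genuine obstacle: the corollary is essentially a restatement of the fact that exponential decay in $L^\infty$ with a common rate implies uniform smallness of the $L^p$-tails, for any $p>0$.
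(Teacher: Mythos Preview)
Your argument is correct and is exactly the reasoning the paper has in mind: the corollary is stated there without a separate proof, as an immediate consequence of the uniform exponential bound defining $\cE_{a,k}$. You have simply made explicit the one-line tail estimate $\int_{|x|>r}|f|^p \le \frac{2k^p}{ap}e^{-apr}$ that the paper leaves to the reader.
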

Let $\cE_{a,k,K,\alpha}^p$ be the subset of all {\it $\alpha$-H\"older uniformly continuous} functions with constant $K$ in $\cE_{a,k}$, i.e. all functions $f \in \cE_{a,k}$ such that for every such $f$ there exists a non-negative valued $\omega_\psi \in L^p(\RR)$, such that  
\begin{equation}\label{Holder}
| \tau_\delta f(\eta)-f(\eta) | \le \omega_f(\eta) |\delta|^\alpha,  \hspace{3mm} \| \omega_f\|_p \le K, 
\end{equation}
where $(\tau_\delta f)(x)=f(x-\delta)$. We will  also denote the subset of  {\it $\alpha$-H\"older locally uniformly continuous} functions as $\cE_{a,k,K,\alpha,\delta_0}^p$, i.e. the subset for which $(\ref{Holder})$ holds for all $|\delta| < \delta_0$. Both of these families are {\it equicontinous} in $L^p(\RR)$, i.e.
\begin{equation}
\label{equicont}  \lim_{|\delta| \rightarrow 0}\| \tau_\delta f - f \|_p=0 
\end{equation}
uniformly on  both $\cE_{a,k,K,\alpha}^p$ and $\cE_{a,k,K,\alpha,\delta_0}^p$. By Frechet-Kolmogorov-Riesz-Weil Compactness Theorem (see \cite{HHM}),  both $\cE_{a,k,K,\alpha}^p$ and $\cE_{a,k,K,\alpha,\delta_0}^p$ are relatively compact in  $L^p(\RR)$ for all $p \ge 1$.

We will require a straightforward technical lemma before we proceed to the {\it a-priori} bounds.
\begin{lemma} \label{exp_diff}
  Fix $\beta_0 \in (0,1)$. There exist a constant $C=C(\beta_0)>0$ such that for all $\beta \in (\beta_0,1)$
\begin{equation}
  \label{L5}   E(\eta,\delta,\beta):=\left| e^{ -(1-\beta^2) {|\eta|^2 \over \beta^2}} -  e^{ -(1-\beta^2) {|\eta-\delta|^2 \over \beta^2}} \right| \le C \ \left({1 \over \beta^2}-1 \right)^{1 \over 2} |\delta|.
\end{equation}
\end{lemma}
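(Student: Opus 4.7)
The plan is to reduce the estimate to a one-variable question about Gaussians. Introduce the shorthand $\alpha=(1-\beta^2)/\beta^2=1/\beta^2-1>0$ and observe that since only $|\eta|^2=\eta^2$ and $|\eta-\delta|^2=(\eta-\delta)^2$ appear inside the exponentials, the absolute value bars are cosmetic. Thus $E(\eta,\delta,\beta)=|f(\eta)-f(\eta-\delta)|$ for the single Gaussian $f(t)=e^{-\alpha t^2}$, and the lemma reduces to bounding $|f(\eta)-f(\eta-\delta)|$ uniformly in $\eta$ by $C\alpha^{1/2}|\delta|$.

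Next I would apply the Mean Value Theorem: there is some $\xi$ between $\eta-\delta$ and $\eta$ such that
\[
E(\eta,\delta,\beta)=|f'(\xi)|\,|\delta|=2\alpha|\xi|\,e^{-\alpha\xi^2}\,|\delta|,
\]
so it suffices to prove the pointwise estimate $\sup_{t\in\RR}\, 2\alpha|t|\,e^{-\alpha t^2}\le C\alpha^{1/2}$.

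Finally I would carry out the elementary maximization of $g(t):=2\alpha t\,e^{-\alpha t^2}$ on $[0,\infty)$. Setting $g'(t)=2\alpha(1-2\alpha t^2)e^{-\alpha t^2}=0$ gives the critical point $t_\star=1/\sqrt{2\alpha}$, at which
\[
g(t_\star)=2\alpha\cdot\frac{1}{\sqrt{2\alpha}}\cdot e^{-1/2}=\sqrt{2\alpha/e}.
\]
Combining the previous two displays yields
\[
E(\eta,\delta,\beta)\le \sqrt{2/e}\;\alpha^{1/2}\,|\delta|=\sqrt{2/e}\left(\frac{1}{\beta^2}-1\right)^{1/2}\!|\delta|,
\]
which is the desired bound with $C=\sqrt{2/e}$. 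Note that the constant is in fact universal, independent of $\beta_0$; the restriction $\beta\in(\beta_0,1)$ plays no role in this lemma. There is no genuine obstacle — the estimate is just the observation that the Gaussian $e^{-\alpha t^2}$ has maximal slope of order $\sqrt{\alpha}$, which matches the exponent of $\alpha$ on the right-hand side.
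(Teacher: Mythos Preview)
Your proof is correct and considerably cleaner than the paper's. The paper proceeds by a four-way case analysis based on the relative sizes of $|\eta|$, $|\eta-\delta|$, and $|\delta|$: in each case it factors out the larger of the two exponentials, writes the remainder as $1-e^{-(\cdot)}$, and controls it via the auxiliary inequality $e^{-x^2}(1-e^{-ax})\le a$. Your single application of the Mean Value Theorem to $t\mapsto e^{-\alpha t^2}$, followed by the elementary maximization of $|f'|$, bypasses all of this and even yields the sharp universal constant $C=\sqrt{2/e}$; in particular your observation that $\beta_0$ plays no role is correct and is a slight sharpening of the lemma as stated. The paper's argument, by contrast, produces in one of the cases a stray factor $e^{\alpha|\delta|^2}$, so its constant is only uniform once $|\delta|$ is also restricted (as it is in the later application, where $|\delta|<\delta_0$).
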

\begin{proof}
Using the shorthanded notation $E$ for the difference of exponentials as in $(\ref{L5})$, we have in the case $|\eta|>|\eta-\delta|$ and $|\eta|>|\delta|$:
\begin{equation}
  \label{bound18}  E \hspace{-0.5mm}  \le  \hspace{-0.5mm} e^{\shortminus{1 - \beta^2 \over \beta^2} {|\eta\shortminus\delta|^2}} \hspace{-0.3mm} \left| \hspace{-0.2mm} 1\hspace{-0.5mm}  \shortminus \hspace{-0.5mm}  e^{ {1 - \beta^2 \over \beta^2} \left( |\eta\shortminus\delta|^2 \shortminus |\eta|^2    \right)} \hspace{-0.2mm} \right| \hspace{-0.5mm} \le \hspace{-0.5mm}  e^{ {1 - \beta^2 \over  \beta^2} \left(|\delta|^2\shortminus {|\eta|^2\over 2} \right)}\hspace{-1.2mm}  \left(\hspace{-0.5mm} 1 \hspace{-0.5mm} \shortminus \hspace{-0.5mm}  e^{ \shortminus{ 1-\beta^2 \over \beta^2 } D  |\eta| |\delta|}\hspace{-0.2mm} \right)
\end{equation}
for some constant $D>0$. To get a bound on this expression, proportional to a power of $|\delta|$, we first notice that for non-negative $x$,
\begin{equation}
  \label{bound9} e^{-x^2}  \left(1 - e^{-a x} \right)\le{ 2 a x \over (1 + a x) (1 +  x^2 )  } \le { 2 a x \over  (1 + x^2 )  } \le a.
\end{equation}
We can now use this estimate on $(\ref{bound18})$ with
$$x={1 \over \sqrt{2} } \left({1 \over \beta^2}-1\right)^{1\over 2} |\eta|, \quad a= \sqrt{2} D |\delta|  \left({1 \over \beta^2}-1 \right)^{1\over 2},$$
to get
\begin{equation}
  \label{L1}   E  \le  C \ e^{ {1-\beta^2 \over  \beta^2} |\delta|^2}    \left({1 \over \beta^2} - 1 \right)^{1\over 2}   \ |\delta|.
\end{equation}
In the case $|\eta|>|\eta-\delta|$ and $|\eta|<|\delta|$,
\begin{equation}
  \label{L2}   E  \le      1 -  e^{ -{ 1-\beta^2 \over \beta^2 } D |\delta| |\eta|} \le  C   \left({1 \over \beta^2} - 1 \right)  |\delta|^2 .
\end{equation}
In the case $|\eta-\delta|>|\eta|>|\delta|$, we can again use estimate $(\ref{bound9})$:
\begin{equation}
  \label{L3}   E  \le   e^{ -{1-\beta^2 \over \beta^2} {|\eta|^2 }} \left(1-e^{-{1-\beta^2 \over \beta^2} D|\delta| |\eta|} \right)  \le  C\  \left({1 \over \beta^2}-1 \right)^{1\over 2}   \ |\delta|.
\end{equation}
In the case $|\eta-\delta|>|\eta|$ and $|\delta|>|\eta|$, we can crudely bound as follows:
\begin{equation}
\label{L4}   E   \hspace{-0.5mm}  \le  \hspace{-0.5mm}   e^{ -{1-\beta^2 \over \beta^2} {|\eta|^2 }}  \hspace{-0.5mm}  \left(1  \hspace{-0.5mm}  -  \hspace{-0.5mm}   e^{-{1-\beta^2 \over \beta^2} D|\eta| |\delta|} \right)  \hspace{-0.5mm}   \le   \hspace{-0.5mm}   1  \hspace{-0.5mm}   -   \hspace{-0.5mm}   e^{-{1-\beta^2 \over \beta^2} D|\delta|^{2}}  \le  C \ \left({1 \over \beta^2} -1\right)  |\delta|^{2} \hspace{-0.5mm} .
\end{equation}
The claim follows after one combines bounds $(\ref{L1})$, $(\ref{L2})$,  $(\ref{L3})$ and  $(\ref{L4})$.
\end{proof}


We continue with the proof of the {\it a-priori} bounds. Per our convention, $C$ denotes a constant whose size has no bearing on the arguments in the proofs (they may, however depend on $\beta_0$, $\delta_0$ and $a_0$). 



\begin{proposition}\label{apriori2}(\underline{A-priori bounds: equicontinuity.}) 
  For every $p \ge 1$ there are constants $0<\beta_0 <1$, $A>0$, $K>0$, $a>0$, $\delta_0>0$,  $k>0$ and $0<\alpha < {1 / p}$, such that for every $\beta \in (\beta_0,1)$ the convex subset
\begin{equation}
\label{cEpak}  \cE^{p}_{a,k,K,A,\alpha, \delta_0}=\{\psi \in \cE^{p}_{a,k,K,\alpha, \delta_0}: \omega_\psi(\eta) \le A |\eta| e^{-a |\eta|} \hspace{1.5mm} \mathrm{for} \hspace{1.5mm} |\eta| > 1\}
\end{equation}
of  $\cE^{p}_{a,k,K,\alpha, \delta_0}$, is renormalization invariant under $\cR_\beta$.
\end{proposition}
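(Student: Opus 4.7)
The plan is to split $\cR_\beta[\psi] = A[\psi] + B[\psi]$ with
\[
A[\psi](\eta) = e^{-(1/\beta^2-1)\eta^2}\psi(\eta/\beta),\qquad B[\psi](\eta) = 2e^{\eta^2}\int_\eta^{\eta/\beta}e^{-\zeta^2}(\psi\bullet\psi)(\zeta)\,\dif\zeta,
\]
and to build a modulus $\omega_{\cR_\beta[\psi]}$ by adding contributions from $A$ and $B$. The pointwise bound $|\cR_\beta[\psi](\eta)| \le k e^{-a|\eta|}$ and the preservation of evenness are already handled by Proposition \ref{bounds}, so all that remains is to produce an $\alpha$-Hölder modulus in $L^p(\RR)$ of norm $\le K$ which is dominated by $A|\eta|e^{-a|\eta|}$ for $|\eta|>1$.

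For the linear term I would write
\[
A[\psi](\eta) - A[\psi](\eta-\delta) = E(\eta,\delta,\beta)\psi(\eta/\beta) + e^{-(1/\beta^2-1)(\eta-\delta)^2}[\psi(\eta/\beta)-\psi((\eta-\delta)/\beta)]
\]
with $E$ as in Lemma \ref{exp_diff}. That lemma, combined with the interpolation $|\delta| \le \delta_0^{1-\alpha}|\delta|^\alpha$, bounds the first summand by $C(1-\beta_0^2)^{1/2}\delta_0^{1-\alpha}|\delta|^\alpha k e^{-a|\eta|/\beta}$; the Hölder hypothesis on $\psi$ bounds the second by $\beta^{-\alpha}\omega_\psi(\eta/\beta)|\delta|^\alpha$ with an extra Gaussian damping. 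Both contributions decay exponentially in $|\eta|$, lie in $L^p(\RR)$, and, using $\omega_\psi(\eta/\beta) \le A|\eta/\beta|e^{-a|\eta|/\beta}$ whenever $|\eta|>\beta$, they are dominated by $A|\eta|e^{-a|\eta|}$ provided $a$ is chosen large enough that $e^{-a|\eta|(1/\beta-1)}$ beats the constant $\beta^{-(1+\alpha)}$.

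For the nonlinear term I would substitute $\zeta=\eta t$ to rewrite $B[\psi](\eta) = 2\eta\int_1^{1/\beta}e^{\eta^2(1-t^2)}(\psi\bullet\psi)(\eta t)\,\dif t$ and split the difference into three pieces corresponding to the variation of the prefactor $\eta$, of the Gaussian $e^{\eta^2(1-t^2)}$ (treated by a variant of Lemma \ref{exp_diff} using that $1-t^2\le 0$ on the integration interval), and of the Volterra convolution $(\psi\bullet\psi)(\eta t)$. The latter is the delicate piece: splitting the convolution at the common endpoint and applying the pointwise Hölder bound $|\psi(\eta t-z)-\psi((\eta-\delta)t-z)|\le \omega_\psi(\eta t-z)(t|\delta|)^\alpha$ together with $|\psi|\le ke^{-a|\cdot|}$ yields
\[
|(\psi\bullet\psi)(\eta t)-(\psi\bullet\psi)((\eta-\delta)t)| \le (t|\delta|)^\alpha(|\psi|\ast\omega_\psi)(\eta t) + Ck^2|\delta|\, t\,|\eta|e^{-a|\eta|t}.
\]
Young's inequality controls $\||\psi|\ast\omega_\psi\|_p \le \|\psi\|_1\|\omega_\psi\|_p$; integrating against $2\eta e^{\eta^2(1-t^2)}$ over $t\in[1,1/\beta]$ then extracts an overall factor $(1-\beta^2)$ and exponential decay in $|\eta|$.

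Summing all contributions defines $\omega_{\cR_\beta[\psi]}$, after which one verifies that each summand is bounded by $A|\eta|e^{-a|\eta|}$ for $|\eta|>1$ and that $\|\omega_{\cR_\beta[\psi]}\|_p\le K$. The main obstacle is balancing the many constants: the loss factors $\beta^{-\alpha}$ and $\beta^{-(1+\alpha)}$ introduced by the rescaling must be absorbed either by the exponential gain $e^{-a|\eta|(1/\beta-1)}$ on $|\eta|>1$, which forces $a$ to be large, or by the smallness $(1-\beta^2)^{1/2}$ coming from Lemma \ref{exp_diff} and from the length of the integration interval $[1,1/\beta]$, which forces $\beta_0$ to be close to $1$. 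Choosing $a$ large, then $\alpha<1/p$, then $\delta_0$ small, then $\beta_0$ close to $1$, and finally $K$ and $A$ proportional to $k$, makes the set $\cE^{p}_{a,k,K,A,\alpha,\delta_0}$ invariant under $\cR_\beta$.
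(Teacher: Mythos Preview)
Your decomposition into a linear piece $A[\psi]$ and a nonlinear piece $B[\psi]$, the substitution $\zeta=\eta t$ in $B$, and the further splitting of $B[\psi](\eta)-B[\psi](\eta-\delta)$ according to which factor is varied, is exactly the paper's approach (the paper names the nonlinear pieces $J_1,J_2,J_3,J_4$, corresponding respectively to your variation of the Gaussian, of the prefactor $\eta$, of the integrand via H\"older, and of the endpoint of the inner integral). The tools you invoke --- Lemma~\ref{exp_diff}, the pointwise bound $|(\psi\bullet\psi)(x)|\le k^2|x|e^{-a|x|}$, and Young's inequality on $|\psi|*\omega_\psi$ --- are the same as well.

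Where your plan goes wrong is the final paragraph. For the \emph{pointwise} constraint $\omega_{\cR_\beta[\psi]}(\eta)\le A|\eta|e^{-a|\eta|}$ on $|\eta|>1$, the exponential gain $e^{-a|\eta|(1/\beta-1)}$ (together with the Gaussian factor) does absorb the loss $\beta^{-(1+\alpha)}$, as you say. But for the \emph{$L^p$-norm} constraint $\|\omega_{\cR_\beta[\psi]}\|_p\le K$, neither the exponential gain (which is $\approx 1$ near $\eta=0$) nor a factor $(1-\beta^2)^{1/2}$ is available on the dominant linear term $\beta^{-\alpha}\omega_\psi(\cdot/\beta)$: what actually saves you is the Jacobian $\beta^{1/p}$ from the change of variables, giving $\|\beta^{-\alpha}\omega_\psi(\cdot/\beta)\|_p=\beta^{1/p-\alpha}K$. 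This is the true reason for the hypothesis $\alpha<1/p$. More seriously, once you write down the two invariance inequalities you will find they are \emph{coupled}: the $A$-inequality carries a term $CkK$ on the right (from the $J_3$ contribution on $|\eta|\le 1$), and the $K$-inequality carries a term $CkA$ on the right (from the $J_3$ contribution on $|\eta|>1$). Declaring $K$ and $A$ ``proportional to $k$'' does not close this loop; the paper resolves the coupled system by taking $k$ sufficiently small (condition $(\ref{compcond})$), a choice you never make.
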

\begin{proof}
  Assume that $\psi \in  \cE^p_{a,k,K,A,\alpha, \delta_0}$. We consider  linear and non-linear terms separately for $|\delta|<\beta \delta_0$. Clearly,  it is sufficient to prove the result for $0 \le  \delta < \beta \delta_0$.

  \noindent \textit{\textbf{1)  Linear terms, case $\bm{0 < \delta<  \beta \delta_0}$.}}   Denote,
  \begin{equation}
\label{L}    L(\eta,\delta,\beta)=  \left| e^{ -(1-\beta^2) {|\eta|^2 \over \beta^2}} \psi\left( {\eta \over \beta} \right) -  e^{ -(1-\beta^2) {|\eta-\delta|^2 \over \beta^2}} \psi\left( {\eta -\delta \over \beta} \right) \right|.
  \end{equation}
  We have for all $|\delta| < \beta \delta_0$, using the notation of $(\ref{L5})$
  \begin{align}
    \nonumber    L(\eta,\delta,\beta) & \le   E(\eta,\delta,\beta) \left| \psi  \hspace{-0.5mm}   \left( {\eta -\delta  \over \beta} \right) \right| +  e^{ -{1-\beta^2 \over \beta^2} {|\eta|^2}} \left| \psi  \left( {\eta \over \beta} \right)  -  \psi  \left( {\eta -\delta \over \beta} \right) \right|  \\
    \nonumber  & \le  E(\eta,\delta,\beta)     \left| \psi\left( {\eta -\delta \over \beta} \right) \right|   +  \beta^{-\alpha}  e^{ -{1-\beta^2 \over \beta^2} {|\eta|^2}}   \omega_\psi \left({\eta \over \beta} \right) |\delta|^\alpha \\
 \nonumber  & =  L_1(\eta,\delta,\beta)+L_2(\eta,\delta,\beta).
  \end{align}
We have 
\begin{equation*}
E(\eta,\delta,\beta) \le  \left\{ 1- e^{ -{ 1-\beta^2 \over \beta^2} \left( |\eta|^2 -|\eta-\delta|^2\right) } , \hspace{2mm}  |\eta| > |\eta-\delta|, \atop  1- e^{ -{ 1-\beta^2 \over \beta^2} \left( |\eta-\delta|^2 -|\eta|^2 \right) }, \hspace{2mm}  |\eta| \le |\eta-\delta|.    \right.
\end{equation*}
In both cases, if $|\eta| > 2 |\delta|$, the following very conservative estimate holds:
\begin{equation*}
  E(\eta,\delta,\beta) \le 1- e^{ -{ 1-\beta^2 \over \beta^2} D |\eta| |\delta| } \le  { 1-\beta^2 \over \beta^2} D |\eta| |\delta|
\end{equation*}
for some constant $D>0$. Therefore,
\begin{align*}
   \| L_1 \|_{p}^p & \le   \hspace{-1mm}   \int \displaylimits_{|\eta| \le 2 |\delta|  }  \hspace{-2mm} E(\eta,\delta,\beta)^p  \left| \psi\left( {\eta -\delta  \over \beta} \right) \right|^p  \dif \eta  + \hspace{-1.5mm}   \int \displaylimits_{|\eta| > 2 |\delta|  }  \hspace{-2mm} E(\eta,\delta,\beta)^p  \left| \psi\left( {\eta -\delta \over \beta} \right) \right|^p  \dif \eta \\
    & \le  k \left( \hspace{-0.5mm}   1\hspace{-0.5mm}  -\hspace{-0.5mm}  e^{\shortminus {1-\beta^2  \over \beta^2 } |3 \delta|^2   } \hspace{-0.5mm}     \right)^p  \hspace{-3mm} \int \displaylimits_{|\eta| \le 2 |\delta|  }  \hspace{-3mm}  e^{\shortminus p a {|\eta-\delta| \over \beta}}    d \eta + C k (1\hspace{-0.5mm} -\hspace{-0.5mm} \beta)^p |\delta|^p \hspace{-3.5mm} \int \displaylimits_{|\eta| > 2 |\delta|  }  \hspace{-3mm} |\eta|^p e^{\shortminus p a {|\eta-\delta| \over \beta}}  \dif \eta \\
    & \le  C k (1-\beta)^p \left( |\delta|^{3 p} +|\delta|^p \right).
\end{align*}
We have, therefore, 
\begin{equation} \label{Lestimate2}
  \| L \|_{p}   \le   C k  (1-\beta)  |\delta| + \beta^{{1 \over p}-\alpha}  K  |\delta|^\alpha.
  \end{equation}

Additionally, in the case $\eta>1$ and $\delta>0$, we have the following bound on the linear terms themselves (and not the norm):
\begin{eqnarray}
  \nonumber L(\eta,\delta)  &\le& C k (1-\beta)|\eta|  \delta e^{ -a{\eta-\delta \over \beta}} + \beta^{-\alpha} e^{-{1-\beta^2 \over \beta^2} |\eta|^2  } \omega_{\psi}\left({\eta \over \beta}\right) \delta^\alpha \\
   \label{Lterms}  &\le& C k (1-\beta)|\eta|  \delta e^{ -a{\eta \over \beta}} + A \beta^{-\alpha-1} e^{-{1-\beta^2 \over \beta^2} |\eta|^2  }|\eta|  e^{-a {|\eta| \over \beta}   } \delta^\alpha
\end{eqnarray}


\noindent \textit{\textbf{2)  Non-linear terms, case $\bm{0 <\delta<  \beta \delta_0}$.}}  We consider $\|\hat J -\tau_\delta \hat J\|_p$, $\hat J= 2 J \circ \beta^{-1}$ where $J$ is defined in  $(\ref{J})$.  We have
$$\hat J=   2 \eta  \int_{1}^{1 \over \beta}  e^{|\eta|^2 (1-t^2)} \int_{0}^{\eta t}   \psi(x) \psi(\eta t  -  x)  \ \dif x    \ \dif  t,$$
and
 $$|\hat J(\eta)-\hat J(\eta-\delta)| \le J_1(\eta,\delta)+J_2(\eta,\delta)+J_3(\eta,\delta) + J_4(\eta,\delta),$$
 where
 \begin{align}
   \nonumber J_1(\eta,\delta)  & \le    C |\eta|  \int_{1}^{1 \over \beta}   \left| e^{|\eta|^2 \left(1-t^2\right)}  - e^{|\eta-\delta|^2 \left(1-t^2\right)}\right| \left|  \int_{0}^{\eta t}   |\psi(x)| |\psi(\eta t  -  x)|   \ \dif x  \right| \  \dif  t, \\
   \nonumber J_2(\eta,\delta)  & \le  C  \delta \int_{1}^{1 \over \beta}   e^{|\eta-\delta|^2 \left(1-t^2\right)}  \left|  \int_{0}^{\eta t} |\psi(x)| |\psi(\eta t-x)|    \  \dif x    \  \dif  t \right|, \\
   \nonumber J_3(\eta,\delta)  &\le C  |\eta \hspace{-0.3mm} \shortminus \hspace{-0.3mm}  \delta|   \int_{1}^{1 \over \beta} \hspace{-1mm}  {e^{|\eta\shortminus \delta|^2 \left(1\shortminus t^2\right)} }  \left|  \int_0^{\eta t}  \hspace{-2mm} |\psi(x)| \left| \psi(\eta t \hspace{-0.3mm} \shortminus \hspace{-0.3mm}  x)\hspace{-0.3mm} \shortminus \hspace{-0.3mm} \psi((\eta\hspace{-0.3mm} \shortminus \hspace{-0.3mm}  \delta) t \hspace{-0.3mm} \shortminus \hspace{-0.3mm}  x) \right|   \  \dif x     \  \dif  t \right|, \\
   \nonumber J_4(\eta,\delta)  &\le C  |\eta- \delta|   \int_{1}^{1 \over \beta}   {e^{|\eta-\delta|^2 \left(1-t^2\right)} }  \left| \hspace{0.6mm}  \int_{(\eta-\delta ) t}^{\eta t}  \hspace{-1mm} |\psi(x)||\psi((\eta-\delta) t -x)|   \  \dif x  \right|  \  \dif  t.
 \end{align}

 We consider each $J_i$ separately.

\vspace{3mm}
 
 \noindent \underline{\it $a)$ Calculation of $J_1$.}  Since $|(\psi \bullet \psi)(x)| \le e^{-a |x|} |x|$, we have 
 \begin{eqnarray}
   \nonumber J_1(\eta,\delta)  & \le &  C k^2  |\eta| \int_{1}^{1 \over \beta}  { \left| e^{|\eta|^2 \left(1-t^2\right)}  -  e^{|\eta-\delta|^2 \left(1-t^2\right)}\right| }  e^{-a |\eta t|} |\eta t|  \  \dif  t.
 \end{eqnarray}
 Therefore, in the case $\eta>1$, we have  $\eta \ge \eta-\delta$ and $\eta -\delta > (1-\delta) \eta$, and
\begin{align}
  \nonumber  J_1(\eta,\delta)    & \le C k^2  \int_{1}^{1 \over \beta} e^{-(\eta-\delta)^2 (t^2-1)} \left(1-e^{-\delta (2 \eta -\delta) (t^2-1)} \right)  e^{-a \eta t} \eta t   \  \dif \eta t  \\
    \nonumber   & \le C k^2  \int_{1}^{1 \over \beta} e^{-(1-\delta)^2\eta^2 (t^2-1)}  \left(1-e^{-C (1-\beta^2) \eta \delta} \right)   e^{-a \eta t} \eta t   \  \dif \eta t  \\
  \nonumber &\le C k^2 \delta \eta (1-\beta^2) e^{(1-\delta)^2\eta^2} \int_{(1-\delta) \eta}^{(1-\delta)\eta \over \beta} e^{-y^2}  e^{-{a \over 1-\delta} y} y \ d y \\
  \label{J1terms}  &\le  C k^2 \delta \eta (1-\beta^2) e^{-a \eta} \left(1-e^{-\left({1 \over \beta^2}-1\right) \eta^2 -\left({1 \over \beta} -1  \right)a \eta }     \right),
\end{align}
while the norm itself can be bounded,  using the bound $(\ref{L5})$ on the difference of exponentials, as
\begin{equation}
  \label{J1}  \| J_1\|_{p}   \le C k^2 (1 -\beta )^{3 \over 2}    \delta.  
\end{equation}
\noindent \underline{\it $b)$ Calculation of $J_2$.} For $\eta>1$ this bound follows closely that on $J_1$:
\begin{align}
  \nonumber J_2(\eta,\delta)   &\le  C  k^2 \delta \int_{1}^{1 \over \beta}  { e^{(\eta-\delta)^2 \left(1-t^2\right)}}  e^{-a \eta t} \eta t  \  \dif  t  \\
  \label{J2terms}  &\le  C k^2 \delta \eta^{-1}  e^{-a \eta} \left(1-e^{-\left({1 \over \beta^2}-1\right) \eta^2 -\left({1 \over \beta} -1  \right) a \eta}     \right),
 \end{align}
while the norm itself is bounded as
 \begin{equation}
   \label{J2} \|J_2\|_{p} \le  C k^2 \delta  \left( \int_{\RR} e^{-a p \eta} \left( {1 \over \beta} -1 \right)^p \eta^p \ d \eta  \right)^{1 \over p} \le   C k^2 (1-\beta)   \delta.
 \end{equation}

  
\noindent  \underline{\it $c)$ Calculation of $J_3$.}  We first consider the case of  $|\eta| \le 1$. 
\begin{align}
  \nonumber J_3(\eta,\delta)   &   \le   C \eta \hspace{-1mm} \int_{1}^{1 \over \beta}  \hspace{-0.8mm}  {e^{(\eta-\delta)^2 \left(1-t^2 \right)}}  \hspace{-1.0mm}  \int_{0}^{\eta t}  \hspace{-1mm} |\psi(x)|  \omega_\psi(\eta t -x ) (\delta t)^\alpha    \  \dif x   \  \dif  t \\
  \nonumber   &  \le    C    { \delta^\alpha} \eta  {e^{(\eta \shortminus \delta)^2}} \hspace{-0.5mm} \int_{1}^{1 \over \beta}  \hspace{-0.5mm}  {e^{- {(\eta - \delta)^2} t^2}}   t^\alpha \ d t  \  \| \psi\|_q \ \| \omega_\psi \|_p\\
  \label{J3terms1}    &  \le   C  K k (1-\beta)  \delta^\alpha \eta,
\end{align}
by Young's inequality with $1/p+1/q+1/r=2$  and  $r=1$. Next, for  $\eta>1$,
\begin{align}
  \nonumber  \int_{0}^{\eta t}  \hspace{-1mm} |\psi(\eta t -x )|  \omega_\psi(x )   \  \dif x  &\le  \int_{0}^{1}  \hspace{-1mm} |\psi(\eta t -x)|  \omega_\psi(x )    \  \dif x  + \hspace{-1.0mm}   \int_{1}^{\eta t}   \hspace{-2mm} |\psi(\eta t - x)| \  \omega_\psi(x)   \  \dif x \\
 \nonumber  &\le   k  \int_{0}^{1}  \hspace{-1mm} e^{-a|\eta t -x|} \omega_\psi(x )    \  \dif x+  \hspace{-0.5mm}  A k  \int_{1}^{\eta t}   \hspace{-2.5mm} e^{-a (\eta t -x)}  e^{-a x} x   \  \dif x  \\
 \nonumber  &\le   C K k  e^{-a \eta t}  +  C A k e^{-a \eta t} \eta^2 t^2.
\end{align}
Therefore, in the case $\eta>1$,
\begin{align}
  \nonumber  J_3(\eta,\delta) &\le  C k e^{(\eta-\delta)^2}  \delta^\alpha \int_1^{1 \over \beta} e^{-(\eta-\delta)^2 t^2 - a \eta t} \left( K+A \eta^2 t^2   \right)   \  \dif \eta t   \\
  \nonumber & \le  C K k \delta^\alpha e^{\hspace{-0.3mm} \shortminus a \eta} \hspace{-0.6mm}   \left(  \hspace{-0.6mm}  1\hspace{-0.5mm} \shortminus \hspace{-0.5mm}e^{\hspace{-0.3mm} \shortminus \hspace{-0.3mm}\left({1 \over \beta} \shortminus 1 \right)a \eta} \right) \hspace{-0.6mm} +  \hspace{-0.1mm}   C A k e^{(1 \shortminus \delta)^2 \eta^2}  \delta^\alpha \eta \hspace{-3.5mm}  \int \displaylimits_{(1 \shortminus \delta) \eta}^{(1 \shortminus \delta) \eta \over \beta} \hspace{-3.5mm} e^{\hspace{-0.3mm} \shortminus \hspace{-0.3mm}\left(y+ {a \over 2(1\hspace{-0.3mm} \shortminus \hspace{-0.3mm}\delta)} \right)^2}  \hspace{-1.5mm}  y   \dif y \\
        \label{J3terms2} & \le  C k \delta^\alpha e^{-a \eta} \hspace{-0.7mm} \left( \hspace{-0.8mm}K \hspace{-0.8mm}  \left(\hspace{-0.5mm} 1\hspace{-0.5mm} \shortminus \hspace{-0.5mm} e^{-\left({1 \over \beta}-1 \right)a \eta} \right) \hspace{-0.6mm} + \hspace{-0.6mm} A \eta \hspace{-0.6mm} \left( \hspace{-0.6mm} 1\hspace{-0.5mm} \shortminus \hspace{-0.5mm} e^{-\left({1 \over \beta^2}-1\right) \eta^2-\left({1\over\beta}-1 \right)a \eta}  \hspace{-0.3mm}  \right) \hspace{-1.5mm} \right)  \hspace{-0.5mm} .
\end{align}
\noindent  \underline{\it $d)$ Calculation of $J_4$.}  For $\eta > 1$,
 \begin{align}
   \nonumber J_4(\eta,\delta)  &\le C k^2  (\eta- \delta)   \int_{1}^{1 \over \beta}   {e^{(\eta-\delta)^2 \left(1-t^2\right)} }  \int_{(\eta-\delta ) t}^{\eta t}  \hspace{-1mm} e^{-a|x|} e^{-a|(\eta-\delta) t -x|  }  \  \dif x   \  \dif  t \\
   \nonumber   &\le C  (\eta- \delta)  k^2  \int_{1}^{1 \over \beta}   e^{(\eta-\delta)^2 \left(1-t^2\right)}  e^{-a(\eta-\delta) t} \delta t  \  \dif  t \\
    \nonumber   &\le C k^2 (\eta-\delta)^{-1}  e^{-a (\eta-\delta)} \left(1-e^{-\left({1\over \beta^2} -1 \right) (\eta-\delta)^2 -\left({1\over \beta} -1 \right) a (\eta-\delta) } \right) \delta, \\
       \label{J4terms}   &\le C k^2  \eta^{-1} e^{-a\eta} \left(1-e^{-\left({1\over \beta^2} -1 \right)  \eta^2 -\left({1\over \beta} -1 \right) a \eta} \right) \delta,
 \end{align}
 and
 \begin{equation}
       \label{J4} \| J_4 \|_p   \le C k^2 (1-\beta) \delta.
 \end{equation}

\noindent \textit{\textbf{3)  Bound on the constant $A$.}} Denote $f(\eta)= {\left({1 / \beta^2}-1\right) \eta^2+\left({1/\beta}-1 \right)a \eta}$ and $g(\eta)=\left({1/\beta}-1 \right)a \eta$. We can now combine $(\ref{Lestimate2})$, $(\ref{J1terms})$, $(\ref{J2terms})$, $(\ref{J3terms1})$, $(\ref{J3terms2})$  and $(\ref{J4terms})$ to get the following bound on $\omega_{\cR_\beta[\psi]}$ for $\eta>1$:
\begin{align}
  \nonumber \omega_{\cR_\beta[\psi]} &\le A {e^{-{1-\beta^2 \over \beta^2} \eta^2  } \over \beta^{\alpha+1}} \eta e^{-a {\eta \over \beta}   } +  C k (1-\beta)  \delta_0^{1-\alpha} e^{ -a {\eta \over \beta}}  + \\
  \nonumber & \phantom{  A {e^{-{1-\beta^2 \over \beta^2} \eta^2  } \over \beta^{\alpha+1}} \eta e^{-a {\eta \over \beta}   }} \hspace{5mm}  + C k^2 (1-\beta) \delta_0^{1-\alpha} \eta  e^{-a \eta} \left(1-e^{-f(\eta)} \right) + \\
  \nonumber  & \phantom{ A {e^{-{1-\beta^2 \over \beta^2} \eta^2  } \over \beta^{\alpha+1}} \eta e^{-a {\eta \over \beta}   }} \hspace{5mm} + C k^2 \delta_0^{1-\alpha} |\eta|^{-1}  e^{-a |\eta|} \left(1-e^{-f(\eta)}     \right)+ \\
   \label{wrpsi} & \phantom{ A {e^{-{1-\beta^2 \over \beta^2} \eta^2  } \over \beta^{1+\alpha}} \eta  e^{-a {\eta \over \beta}   }} \hspace{5mm} +  C k e^{-a \eta} \left(K (1-\beta)   +  A  \eta \left(1-e^{-f(\eta)} \right)  \right).
\end{align}
A sufficient condition for  $\omega_{\cR_\beta[\psi]} \le A \eta e^{-a \eta}$ is therefore
\begin{equation}
  \nonumber A     \ge   A {e^{ -f(\eta)} \over \beta^{\alpha+1}}  + C k (1-\beta)  \delta_0^{1-\alpha} e^{-g(\eta)}   + C k(k \delta_0^{1-\alpha} +A) \left(1-e^{-f(\eta)} \right) +  C K k (1-\beta),
\end{equation}
which is implied by
\begin{equation}
  \nonumber A {\beta^{1+\alpha} - e^{-f(1)} \over 1 - e^{-f(1)} } -A C k \ge C k  \delta_0^{1-\alpha} e^{-g(1)}  + C k^2 \delta_0^{1-\alpha} + C K k.
\end{equation}
As $\beta \rightarrow 1$, this condition is tangent to 
\begin{equation}
  \label{Acond} A \left( 1- C k -{1 +\alpha \over 2 +a} \right)\ge C k \left( e^{-g(1)}  + k \delta_0^{1-\alpha} +K  \right).
\end{equation}
Suppose that there is a choice of constants such that $(\ref{Acond})$ holds, then the condition  on $\omega_\psi$ in $(\ref{cEpak})$
for  $\eta > 1$ and $\delta>0$, is renormalization invariant. This implies that for  $\eta > 1-\delta$ and $\delta>0$,
 $$|\psi(\eta)-\psi(\eta+\delta)| \le  A (\eta+\delta)  e^{-a (\eta+\delta)} \delta^\alpha <  A (1+\delta) e^{-a \delta} \eta e^{-a \eta} \delta^\alpha < A  \eta e^{-a \eta} \delta^\alpha,$$
if $a>1$ and $\delta_0=\delta_0(a)$ is sufficiently small. Using the fact that $\psi$ is an even function, it follows that 
$$|\psi(\eta)-\psi(\eta-\delta)| \le  A \eta e^{-a |\eta|} \delta^\alpha$$
for $\eta <-(1-\delta)$ and $\delta>0$, and
$$|\psi(\eta)-\psi(\eta+\delta)| \le  A \eta e^{-a |\eta|} \delta^\alpha$$
for $\eta <-1$ and $\delta>0$. We conclude that the condition on $\omega_\psi$ in the definition of the set $(\ref{cEpak})$ is invariant for all $0<\delta < \delta_0$ and $|\eta|>1$. A similar argument gives the invariance  condition for all $|\delta|<\delta_0$.

\noindent \textit{\textbf{4)  Bound on the constant $K$.}} We have computed  bounds on the norms of the  terms $L$, $J_1$, $J_2$ and $J_4$. The only remaining bound is that on $J_3$.

First, we remark, that one gets a  bound identical to  $(\ref{J3terms2})$ in the case of $\eta<-1$. Then, for $|\eta|>1$,  we can bound the factor $1-e^{-g(\eta)}$ in $(\ref{J3terms2})$ very crudely by $C (1-\beta) |\eta|$,  and  the factor $1-e^{-f(\eta)}$  as   $C(1-\beta) |\eta|^2$.
   \begin{align}
     \nonumber \|J_3\|_{p} & \le  C (1 \hspace{-0.5mm} \shortminus \hspace{-0.5mm} \beta) \delta^\alpha k \hspace{-0.8mm} \left( \hspace{-0.8mm} K^p \hspace{-3.2mm} \int \displaylimits_{|\eta| \le 1} \hspace{-2.0mm} |\eta|^p     \dif \eta \hspace{-0.5mm} + \hspace{-0.5mm} K^p \hspace{-3.2mm} \int\displaylimits_{|\eta| >1} \hspace{-2.5mm}  |\eta|^p e^{ \shortminus a p |\eta|}     \dif \eta \hspace{-0.5mm}  + \hspace{-0.5mm}  A^p \hspace{-3.2mm}\int \displaylimits_{|\eta| >1}  \hspace{-2.0mm}  |\eta|^{3 p} e^{ \shortminus a p |\eta|}     \dif \eta    \hspace{-0.8mm}  \right)^{ \hspace{-1.0mm} {1 \over p}} \\
     \label{J3} &\le   C k (1 - \beta) \  \delta^\alpha \ (K+A).
 \end{align}
We can now collect $(\ref{Lestimate2})$, $(\ref{J1})$, $(\ref{J2})$, $(\ref{J3})$ and $(\ref{J4})$ to get  that the condition $\| \omega_\psi\|_p$ is renormalization invariant if 
\begin{equation}
  \nonumber K  \ge  C k (1-\beta) \delta_0^{1-\alpha}+\beta^{{1\over p} - \alpha} K + C k^2 (1-\beta) \delta_0^{1-\alpha}+C k (1-\beta)\left( K +A \right).
\end{equation}
As $\beta \rightarrow 1$, this condition is tangent to
\begin{equation}
  \label{Kcond} K \left(\left({1 \over p} -\alpha  \right) -C k  \right)  \ge  C k  \delta_0^{1-\alpha} + C k^2 \delta_0^{1-\alpha} + C A k.
\end{equation}
For large $K$ and $A$, conditions $(\ref{Acond})$ and $(\ref{Kcond})$ are asymptotic to 
\begin{equation}
\nonumber    A \left( 1- C k -{1 +\alpha \over 2 +a} \right) \ge C k K \quad {\rm and} \quad   K \left(  \left({1 \over p} -\alpha  \right) -C k  \right)  \ge  C k A.  
\end{equation}
These two conditions have a solution if
\begin{equation}
\label{compcond} { {1 \over p} -\alpha -C k   \over C k} \ge {C k \over  1- C k -{1 +\alpha \over 2 +a} },
\end{equation}
where $C$ is a general stand in for a constant independent of $A$, $K$ or $k$, not necessarily the same in all instances. Condition $(\ref{compcond})$ can be satisfied by a choice of $k$.

\noindent \textit{\textbf{5)  Non-linear terms, case $\bm{\delta_0 > |\delta|>  \beta \delta_0}$.}} In this case
\begin{equation}
\nonumber \|\psi - \tau_\delta \psi   \|_p \le  \|\psi    \|_p +  \| \tau_\delta \psi   \|_p  \le  \left( \int_\RR e^{-a p |\eta|} \ d \eta  \right)^{1 \over p} { 2 k  \over \beta_0 \delta_0  } |\delta|, 
\end{equation}
and $K$ can be chosen to be the maximum of $K$ from Part $4)$ and  
$$ \left( \int_\RR e^{-a p |\eta|} \ d \eta  \right)^{1 \over p} { 2 k  \over \beta_0 \delta_0  }.$$
\end{proof}
We will now demonstrate that the intersection $\cN_p=\cE_{a,k,K,A,\alpha, \delta_0}^p \cap \cM_{\mu,\sigma}$ is non-empty. We have already shown in Lemma $\ref{nonempty}$  that the function  $\phi(\eta)=k \min\{1,|\eta|^\nu\} e^{- a |\eta|}$ is in $\cE_{a,k} \cap \cM_{\mu,\sigma}$ for $\nu > -1-\sigma$. The following Lemma shows that the same function belongs to $\cN_p$.
\begin{lemma}\label{nonempty2}
For any $0<k<1$, any $-1>\sigma>-2$ and any $1>\nu>-1-\sigma$, there exist $a>0$, $A>0$, $K>0$, $\delta_0>0$ and $\mu>0$,  such that the function  $\phi(\eta)=k \min\{1,|\eta|^\nu\} e^{- a |\eta|}$ is in $\cN_p=\cE_{a,k,K,A,\alpha, \delta_0}^p \cap \cM_{\mu,\sigma}$. 
\end{lemma}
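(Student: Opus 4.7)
The plan is to build on Lemma \ref{nonempty}, which already furnishes parameters $a$ and $\mu$ for which $\phi \in \cE_{a,k} \cap \cM_{\mu,\sigma}$. What remains is to certify the H\"older-type structure encoded in $\cE^p_{a,k,K,A,\alpha,\delta_0}$: namely, producing a non-negative $\omega_\phi \in L^p(\RR)$ with $\|\omega_\phi\|_p \le K$, with $|\phi(\eta)-\phi(\eta-\delta)| \le \omega_\phi(\eta)|\delta|^\alpha$ for all $|\delta|<\delta_0$, and with the pointwise decay $\omega_\phi(\eta) \le A|\eta|e^{-a|\eta|}$ whenever $|\eta|>1$. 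The hypotheses $\sigma \in (-2,-1)$ and $\nu > -1-\sigma$ force $\nu \in (0,1)$, so $\phi$ is globally $\nu$-H\"older-continuous as the product of the $\nu$-H\"older factor $\min\{1,|\eta|^\nu\}$ and the Lipschitz factor $e^{-a|\eta|}$. I would fix the H\"older exponent $\alpha \in (0,\min\{\nu,1/p\})$.

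The construction of $\omega_\phi$ proceeds by a three-regime case analysis, parametrized by the position of $\eta$ and $\eta-\delta$ relative to the unit interval. \textit{(i)} If $|\eta|,|\eta-\delta|\le 1$, a product-rule estimate together with the elementary inequality $\bigl||\eta|^\nu-|\eta-\delta|^\nu\bigr|\le C_\nu|\delta|^\nu$ yields $|\phi(\eta)-\phi(\eta-\delta)| \le Ck|\delta|^\nu \le Ck\delta_0^{\nu-\alpha}|\delta|^\alpha$. \textit{(ii)} If $|\eta|,|\eta-\delta|>1$, then $\phi(\eta)=ke^{-a|\eta|}$ is $C^1$ with derivative bounded by $ake^{-a|\eta|}$, so the mean value theorem gives $|\phi(\eta)-\phi(\eta-\delta)|\le ake^{a\delta_0}\delta_0^{1-\alpha}e^{-a|\eta|}|\delta|^\alpha$, after using $|\delta|=|\delta|^\alpha|\delta|^{1-\alpha}\le\delta_0^{1-\alpha}|\delta|^\alpha$. \textit{(iii)} In the mixed case both $|\eta|$ and $|\eta-\delta|$ must lie in the annulus $[1-\delta_0,1+\delta_0]$; I would bound the increment by splitting it at the transit point $\pm 1$ and invoking the two one-sided estimates from \textit{(i)} and \textit{(ii)}.

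Based on these bounds I would set
$$\omega_\phi(\eta) := C_1 k\,\mathbf{1}_{|\eta|\le 1+\delta_0}(\eta) + C_2 \, k a\delta_0^{1-\alpha}e^{-a|\eta|}\mathbf{1}_{|\eta|>1-\delta_0}(\eta),$$
where $C_1,C_2$ are the aggregate multipliers emerging from \textit{(i)}--\textit{(iii)}. Since the first summand has compact support and the second decays exponentially, $\omega_\phi \in L^p(\RR)$, with norm dominated by an explicit constant; taking $K$ larger than this constant handles the norm requirement. For the pointwise bound on $|\eta|>1$, on $1<|\eta|\le 1+\delta_0$ the first piece contributes at most $C_1 k \le A|\eta|e^{-a|\eta|}$ provided $A \ge C_1 k e^{a(1+\delta_0)}$, while for all $|\eta|>1$ the second piece is dominated by $C_2 ka\delta_0^{1-\alpha}|\eta|e^{-a|\eta|}$; both conditions are accommodated by enlarging $A$. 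Finally, taking $\mu := \cI[\phi]/2>0$ (positive by Lemma \ref{nonempty}) places $\phi$ in $\cM_{\mu,\sigma}$, completing the inclusion $\phi \in \cN_p$.

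The technical crux is the mixed regime \textit{(iii)}: one must verify that the two one-sided moduli glue consistently across the kink at $|\eta|=1$ without introducing an unbounded or uncontrolled contribution to $\omega_\phi$. This is resolved by estimating $|\phi(\eta)-\phi(\pm 1)|$ and $|\phi(\pm 1)-\phi(\eta-\delta)|$ separately via the estimates from \textit{(i)} and \textit{(ii)} and summing via the triangle inequality; since the transition annulus has measure $2\delta_0$, any constant bound contributes $O(\delta_0^{1/p})$ to $\|\omega_\phi\|_p$ and is absorbed into $K$. The remaining steps are routine integral estimates and parameter-selection arguments.
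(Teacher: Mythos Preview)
Your proposal is correct and follows essentially the same approach as the paper: both verify the H\"older modulus for $\phi$ by splitting according to whether $|\eta|$ and $|\eta-\delta|$ exceed $1$, using the $\nu$-H\"older continuity of $|\eta|^\nu$ together with the Lipschitz bound on $e^{-a|\eta|}$, and then reading off admissible $A$ and $K$. Your version is in fact more carefully organized than the paper's terse three-line argument---in particular you explicitly handle the mixed regime near $|\eta|=1$ and build $\omega_\phi$ as an explicit two-piece function, whereas the paper simply records the pointwise bound $A|\eta|^\nu|\delta|^\nu e^{-a\eta}$ for $\eta>1$ and declares the $L^p$ bound ``straightforward.''
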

\begin{proof}
  Let $\delta_0<1$ and consider first $\eta>1$. We can combine the following inequalities
  \begin{eqnarray}
    \nonumber k \left|e^{-a \eta} -e^{-a (\eta-\delta)}   \right| &\le& C k \delta e^{-a \eta}, \\
    \nonumber k \left|\eta^\nu e^{-a \eta} - \eta^\nu e^{-a (\eta-\delta)}   \right| & \le & C k \delta \eta^\nu e^{-a \eta}, \\
    \nonumber k \left|\eta^\nu e^{-a \eta} - (\eta-\delta)^\nu e^{-a \eta}   \right| & \le & C k  \delta^\nu e^{-a \eta},
  \end{eqnarray}
  to get that there exists some $A>0$, such that
  $$|\phi(\eta)-\phi(\eta-\delta)| \le A |\eta|^\nu |\delta|^\nu e^{-a \eta}.$$
  A similar bound holds for $\delta<0$ and $\eta < -1$. It is also straightforward that there is $K>0$ such that for all $|\delta| < \delta_0$
  $$\|\phi - \tau_\delta \phi  \|_p \le K \delta^\nu.$$
\end{proof}
We have demonstrated the following result.
\begin{proposition}\label{exfp}
  For any $p \ge 1$ there exist $A>0$, $K>0$,  $\alpha$, $a$, $k$, $\mu$, $\beta_0$, $\sigma$  and $\delta_0$  such that for every $\beta \in (\beta_0,1)$ the operator $\cR_\beta$ has a fixed point $\psi_{\beta,p}$ in the $L^p(\RR)$-closure $\overline{\cN_p}$ of $\cN_p=\cE_{a,k,K,A,\alpha,\delta_0}^p \cap \cM_{\mu,\sigma}$.

  Additionally,  every $\overline{\cN_p}$, together with  $\psi_{\beta,p}$, is in $\cap_{n \ge 1} L^n(\RR)$.
\end{proposition}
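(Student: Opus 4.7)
The plan is to apply the Schauder--Tikhonov fixed point theorem to the $L^p$-closure $\overline{\cN_p}$, using the ingredients already assembled in this section. First I would observe that $\cN_p=\cE^p_{a,k,K,A,\alpha,\delta_0}\cap\cM_{\mu,\sigma}$ is convex as an intersection of two convex sets, non-empty for a suitable parameter choice by Lemma \ref{nonempty2}, and $\cR_\beta$-invariant for every $\beta\in(\beta_0,1)$ by combining Proposition \ref{bounds}, the $L^1_u$-invariance lemma, and Proposition \ref{apriori2}. In particular $\overline{\cN_p}$ is a convex subset of $L^p(\RR)$.

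Next I would establish compactness of $\overline{\cN_p}$ in $L^p$. Since $\cN_p\subset\cE^p_{a,k,K,A,\alpha,\delta_0}$, Corollary \ref{equitight} provides equitightness (the uniform exponential decay controls $\int_{|\eta|>r}|\psi|^p$), and the H\"older bound in the definition of $\cE^p_{a,k,K,A,\alpha,\delta_0}$, together with $\|\omega_\psi\|_p\le K$, gives equicontinuity of translations uniformly on the family. The Frechet--Kolmogorov--Riesz--Weil theorem then yields that $\cE^p_{a,k,K,A,\alpha,\delta_0}$ is relatively compact in $L^p$; so $\overline{\cN_p}$ is a compact convex subset of $L^p(\RR)$.

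The third step is to show $\cR_\beta$ maps $\overline{\cN_p}$ into itself. For $p>3/2$, the continuity of $\cR_\beta:L^p\to L^p$ established at the start of the section gives $\cR_\beta(\overline{\cN_p})\subset\overline{\cR_\beta(\cN_p)}\subset\overline{\cN_p}$. For $p\in[1,3/2]$, I would exploit that any $\psi\in\overline{\cN_p}$ inherits the pointwise bound $|\psi(\eta)|\le ke^{-a|\eta|}$ almost everywhere: pick an $L^p$-approximating sequence in $\cN_p$, extract an a.e.-convergent subsequence, and pass to the limit. This embeds $\overline{\cN_p}$ into $L^q$ for all $q\ge 1$ with a common integrable majorant, so $L^p$-convergence on $\overline{\cN_p}$ is equivalent to $L^q$-convergence for any $q>3/2$ (by dominated convergence), reducing continuity to the settled range. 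The same dominated-convergence argument, with majorant $k|\eta|^\sigma e^{-|\eta|^2-a|\eta|}$ (integrable because $\sigma>-1$ at the origin and the exponential weight handles infinity), shows that the condition $\cI[\psi]\ge\mu$ passes to $L^p$-limits, so $\cM_{\mu,\sigma}\cap\cE_{a,k}$ and hence $\cN_p$ are closed in $\overline{\cN_p}$-compatible limits.

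Schauder's fixed point theorem then produces $\psi_{\beta,p}\in\overline{\cN_p}$ with $\cR_\beta[\psi_{\beta,p}]=\psi_{\beta,p}$ for every $\beta\in(\beta_0,1)$. The closing claim that $\overline{\cN_p}\subset\bigcap_{n\ge 1}L^n(\RR)$ is immediate from the a.e.\ exponential bound just noted. I expect the main obstacle to be the bookkeeping of topologies in the third step: verifying that the pointwise exponential decay, the integral functional $\cI$, and the H\"older-type modulus defining $\cE^p_{a,k,K,A,\alpha,\delta_0}$ all survive in the $L^p$-closure, so that continuity and invariance can be combined cleanly, particularly in the low-$p$ regime where the operator bound from the first proposition of the section does not directly apply.
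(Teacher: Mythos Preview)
Your overall strategy---convexity, non-emptiness, invariance from the earlier results, relative compactness via Frechet--Kolmogorov--Riesz--Weil, then Tikhonov---is exactly the paper's route, and in fact you are more careful than the paper on two points it glosses over: the continuity of $\cR_\beta$ for $p\in[1,3/2]$, and the passage of the defining constraints to the $L^p$-closure.

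There is, however, a concrete slip in your third step. You justify dominated convergence for $\cI$ with the majorant $k|\eta|^\sigma e^{-|\eta|^2-a|\eta|}$, claiming it is ``integrable because $\sigma>-1$ at the origin''. But throughout this section the parameter range is $-3<\sigma<-1$ (indeed Lemma~\ref{nonempty2} takes $-2<\sigma<-1$), so $|\eta|^\sigma$ is \emph{not} integrable near the origin and this majorant fails. The very reason the test function in Lemmas~\ref{nonempty} and~\ref{nonempty2} is taken as $k\min\{1,|\eta|^\nu\}e^{-a|\eta|}$ with $\nu>-1-\sigma$ is precisely to force enough vanishing at $0$ so that $\cI[\phi]$ converges. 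Consequently, the argument that the constraint $\cI[\psi]\ge\mu$ survives $L^p$-limits needs a different mechanism than a uniform pointwise majorant; you would need to control the behaviour of elements of $\cN_p$ near the origin uniformly (e.g.\ via the H\"older modulus and evenness), or else accept, as the paper implicitly does, that invariance of $\cN_p$ together with continuity already forces $\cR_\beta(\overline{\cN_p})\subset\overline{\cN_p}$ without separately verifying that the closure sits inside $\cM_{\mu,\sigma}$.
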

\begin{proof} Choose $A$ and $K$ to be the maximum of those provided by Proposition $\ref{apriori2}$ and Lemma $\ref{nonempty2}$, and take the rest of the constants as those specified in Proposition $\ref{apriori2}$ and Lemma $\ref{nonempty2}$. Then $\overline \cN_p$ is non-empty and renormalization-invariant.  Minkowski inequality for the $L^p(\RR)$-norm together with the convexity of the set $\cE_{a,k}$ and $\cM_{\mu,\sigma}$  also implies that $\overline \cN_p$ is convex. Since all functions in $\cN_p$ are exponentially bounded, $\overline \cN_p \subset \cap_{n \ge 1} L^n(\RR)$.

The claim follows by Tikhonov fixed point theorem for the continous operator $\cR_\beta$ on a convex compact $\cR_\beta$-invariant set $\overline \cN_p$.
\end{proof}

\section{Limits of renormalization fixed points}\label{seclimits}
We emphasize, that existence of fixed points for the family $\cR_\beta$ does not yet imply existence of a solution to the equation $(\ref{problem1})$. Rather, one needs to show, that there is {\it one and the same} fixed point $\psi_p$ for {\it all}  $\beta \in (0,1)$. The next Lemma provides a step in this direction.
\begin{lemma}\label{powerbeta}
  Any fixed point $\phi_{\beta}$ of the operator $\cR_\beta$ is also fixed by  $\cR_{\beta^n}$ for all $n \in \NN$.
\end{lemma}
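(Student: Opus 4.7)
The strategy is to first establish an auxiliary semigroup-type identity,
\begin{equation*}
  \cR_{\beta_1}\bigl[\cR_{\beta_2}[\psi]\bigr] = \cR_{\beta_1 \beta_2}[\psi], \qquad \beta_1,\beta_2 \in (0,1),
\end{equation*}
valid \emph{whenever} $\psi$ is a fixed point of $\cR_{\beta_2}$, and then induct on $n$. Granting the identity, the base case $n=1$ is the hypothesis on $\phi_\beta$; for the inductive step, apply the identity with $\beta_1=\beta^n$, $\beta_2=\beta$, $\psi=\phi_\beta$ (whose hypotheses are met since $\phi_\beta$ is fixed by $\cR_\beta$), to get
\begin{equation*}
\cR_{\beta^{n+1}}[\phi_\beta] = \cR_{\beta^n}\bigl[\cR_\beta[\phi_\beta]\bigr] = \cR_{\beta^n}[\phi_\beta] = \phi_\beta,
\end{equation*}
the last equality being the inductive hypothesis.

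To verify the identity, I expand $\cR_{\beta_1}[\cR_{\beta_2}[\psi]](\eta)$ using the defining formula $(\ref{operator})$. In the quadratic piece of $\cR_{\beta_1}$, the convolution $(\cR_{\beta_2}[\psi] \bullet \cR_{\beta_2}[\psi])(\zeta)$ collapses to $(\psi\bullet\psi)(\zeta)$, since $\cR_{\beta_2}[\psi]=\psi$. In the linear piece, the factor $\cR_{\beta_2}[\psi](\eta/\beta_1)$ equals $\psi(\eta/\beta_1)$ by the fixed-point property, which we expand once more via $(\ref{operator})$ as the sum of $\beta_2^{\gamma-2} e^{-(1/\beta_2^\gamma-1)(\eta/\beta_1)^\gamma}\psi(\eta/(\beta_1\beta_2))$ and $\gamma (\eta/\beta_1)^{\gamma-2} e^{(\eta/\beta_1)^\gamma}\int_{\eta/\beta_1}^{\eta/(\beta_1\beta_2)} e^{-\zeta^\gamma}\zeta^{2-\gamma}(\psi\bullet\psi)(\zeta)\,d\zeta$. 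A short exponent computation then shows that the product of the two linear exponentials telescopes as
\begin{equation*}
\beta_1^{\gamma-2}\beta_2^{\gamma-2}\,e^{-(1/\beta_1^\gamma-1)\eta^\gamma-(1/\beta_2^\gamma-1)(\eta/\beta_1)^\gamma} = (\beta_1\beta_2)^{\gamma-2}\,e^{-(1/(\beta_1\beta_2)^\gamma-1)\eta^\gamma},
\end{equation*}
while the prefactor of the ``new'' integral becomes $\gamma\eta^{\gamma-2}e^{-(1/\beta_1^\gamma-1)\eta^\gamma+(\eta/\beta_1)^\gamma}=\gamma\eta^{\gamma-2}e^{\eta^\gamma}$, matching the prefactor of the integral already produced by the quadratic piece of $\cR_{\beta_1}$. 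The integration ranges $[\eta,\eta/\beta_1]$ and $[\eta/\beta_1,\eta/(\beta_1\beta_2)]$ concatenate to $[\eta,\eta/(\beta_1\beta_2)]$, and the resulting expression is precisely $\cR_{\beta_1\beta_2}[\psi](\eta)$.

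The main work is purely algebraic bookkeeping of the exponential factors under the rescaling $\eta\mapsto\eta/\beta_1$. It is worth noting that the identity \emph{fails} outside the fixed-point set: if $\cR_{\beta_2}[\psi]\ne\psi$, then $\cR_{\beta_2}[\psi]\bullet\cR_{\beta_2}[\psi]$ does not reduce to $\psi\bullet\psi$ and the telescoping breaks down. Thus the lemma is genuinely a statement about fixed points rather than a semigroup property of the family $\{\cR_\beta\}$ as a whole.
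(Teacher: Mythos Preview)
Your argument is correct and is, at its core, the same computation the paper carries out: substitute the fixed-point relation at the rescaled argument, watch the exponential weights telescope, and concatenate the two integration intervals into one. The paper performs this directly in the $\gamma=2$ form of equation~$(\ref{problem2})$, writing the fixed-point relation as
\[
e^{-\beta^{2}\eta^{2}}\phi_\beta(\beta\eta)=e^{-\eta^{2}}\phi_\beta(\eta)+2\int_{\beta\eta}^{\eta}e^{-\zeta^{2}}(\phi_\beta\bullet\phi_\beta)(\zeta)\,d\zeta
\]
and inducting on $n$ by evaluating this identity at $\beta^{k}\eta$ and adding. Your packaging as the conditional identity $\cR_{\beta_1}\bigl[\cR_{\beta_2}[\psi]\bigr]=\cR_{\beta_1\beta_2}[\psi]$ on the fixed-point set of $\cR_{\beta_2}$ is a slightly cleaner abstraction (and you keep general $\gamma$), and your closing remark that this is \emph{not} a genuine semigroup law for $\{\cR_\beta\}$ is a useful clarification the paper leaves implicit; but the substance of the proof is identical.
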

\begin{proof}
  We have at the base of induction.
  \begin{align}
    \nonumber     e^{-\beta^4 \eta^2} \phi_{\beta}(\beta^2 \eta) & = e^{-\beta^2 \eta^2} \phi_\beta(\beta \eta) + 2 \int_{\beta^2 \eta}^{\beta \eta} e^{-\zeta^2} (\phi_\beta \bullet \phi_\beta)(\zeta) \ \dif \zeta \\
    \nonumber      &  =   \hspace{-0.5mm} e^{\shortminus \eta^2}  \hspace{-0.8mm} \phi_\beta(\eta)  \hspace{-0.5mm} +  \hspace{-0.7mm} 2  \hspace{-1.0mm} \int_{\beta \eta}^{\eta}  \hspace{-2.2mm}e^{\shortminus \zeta^2}  \hspace{-0.5mm}(\phi_\beta \bullet \phi_\beta)(\zeta)  \hspace{0.2mm} \dif \zeta  \hspace{-0.5mm} +  \hspace{-0.7mm} 2  \hspace{-0.8mm} \int_{\beta^2 \eta}^{\beta \eta} \hspace{-1.9mm}  e^{\shortminus \zeta^2}  \hspace{-0.5mm}(\phi_\beta \bullet \phi_\beta)(\zeta)  \hspace{0.2mm} \dif \zeta \\
        \nonumber   &  =   e^{-\eta^2} \phi_\beta(\eta) + 2 \int_{\beta^2 \eta}^{\eta} e^{-\zeta^2} (\phi_\beta \bullet \phi_\beta)(\zeta) \ \dif \zeta.
  \end{align}
  Assume the result for $n=k$. Then
  \begin{align}
    \nonumber     e^{-\beta^{2(k+1)} \eta^2} &\phi_\beta(\beta^{k+1} \eta)   = e^{-\beta^{2k} \eta^2} \phi_\beta(\beta^{k} \eta)   + 2 \int_{\beta^{k+1} \eta}^{\beta^{k} \eta} e^{-\zeta^2} (\phi_\beta \bullet \phi_\beta)(\zeta)  \ \dif \zeta \\
    \nonumber  &  =    e^{\shortminus \eta^2} \phi_\beta( \eta) \hspace{-0.3mm}+ \hspace{-0.3mm} 2 \hspace{-0.5mm} \int_{\beta^{k} \eta}^{\eta} \hspace{-3mm}e^{\shortminus \zeta^2} \hspace{-0.5mm} (\phi_\beta \bullet \phi_\beta)(\zeta)  \hspace{0.5mm} \dif \zeta  \hspace{-0.3mm}  + \hspace{-0.3mm} 2 \hspace{-0.5mm}  \int_{\beta^{k+1} \eta}^{\beta^{k} \eta}\hspace{-3.0mm} e^{\shortminus \zeta^2} \hspace{-0.5mm} (\phi_\beta \bullet \phi_\beta)(\zeta) \hspace{0.5mm}  \dif \zeta \\
    \nonumber   & = e^{-\eta^2} \phi_\beta(\eta) + 2 \int_{\beta^{2(k+1)} \eta}^{\eta} e^{-\zeta^2} (\phi_\beta \bullet \phi_\beta)(\zeta) \ \dif \zeta.
  \end{align}
\end{proof}
We would like now to address the question of what happens to the functions $\psi_{\beta,p}$, described in Proposition $\ref{exfp}$,  as $\beta$ approaches $1$.

Consider a sequence of  $\{\psi_{\beta_i,p}  \}_{i=0}^\infty$, $\beta_i \in (\beta_0,1)$ and $\beta_i \rightarrow 1$ with $\psi_{\beta_i,p} \in \overline{\cN_p}$ where $\overline{\cN_p}$ is as in Proposition $\ref{exfp}$.  By compactness of the set $\overline{\cN_p}$, we can pass to a converging subsequence, which we will also denote   $\{\psi_{\beta_i,p}  \}_{i=0}^\infty$. Its limit $\psi_p$ is non-trivial. Since $\psi_p$ is exponentially bounded, we have that  $\psi_p \in \cap_{n \ge 1}L^n(\RR)$, and
\begin{equation}
\label{invFpsistar}\cF^{-1}[\psi_p] \in C^\infty(\RR) \cap \bigcap_{m \ge 2} L^m(\RR),
\end{equation}
(recall that for any $p \in (1,2]$, the Fourier transform is a bounded operator from $L^p$ to $L^{p'}$, where $p'$ is the H\"older conjugate of $p$).
  
Consider this  sequence $\{\psi_{\beta_i,p}\}_{i=0}^\infty$, $L^p$-convering to $\psi_p$. For any $\beta \in (0,1)$ there exists  a diverging sequence of integers $n_i$, such that $\beta_i^{n_i} \rightarrow \beta$. Indeed, for any $\epsilon>0$, a sufficient condition for $\beta_{i}^{n_i} \in (\beta-\epsilon, \beta +\epsilon)$ is
$$n_i \in \left( {\ln (\beta+\epsilon )  \over \ln \beta_{i} },   {\ln (\beta-\epsilon )  \over \ln \beta_{i} } \right).$$
The length of this interval is of the order $-\epsilon / \beta \ln \beta_{i}$. Therefore, for any $\epsilon$ there exists $I \in \NN$, such that this length is larger than $1$ for all $i \ge I$, and the interval contains some positive integer $n_i$. Therefore,
\begin{align}
  \nonumber \|\cR_\beta[\psi_{\beta_i,p}] \hspace{-0.5mm} \shortminus \hspace{-0.5mm} \psi_{\beta_i,p}\|_p & \le \left\| \cR_{\beta_i^{n_i}}[\psi_{\beta_i,p} ]  \hspace{-0.5mm} \shortminus \hspace{-0.5mm} \psi_{\beta_i,p} \hspace{-0.3mm} + \hspace{-0.7mm} \left( \hspace{-0.5mm}  e^{ \left(\beta^2\shortminus 1 \right) \eta^2 }  \hspace{-0.5mm} \shortminus \hspace{-0.5mm}  e^{ \left(\beta^{2 n_i}_i\shortminus 1 \right) \eta^2 }  \hspace{-0.5mm} \right) \hspace{-0.6mm} \psi_{\beta_i,p}   \right\|_p \hspace{-0.5mm} +  \\
  \nonumber & \hspace{20.0mm} \hspace{5mm} + \left\| 2 e^{\eta^2} \int_{\eta \over \beta}^{\eta \over \beta^{n_i}_i} e^{-\zeta^2} (\psi_{\beta_i,p} \bullet  \psi_{\beta_i,p})(\zeta) \ \dif \zeta   \right\|_p. 
\end{align}
According to Lemma $(\ref{powerbeta})$, we have $\cR_{\beta_i^{n_i}}[\psi_{\beta_i,p} ]=\psi_{\beta_i,p}$, therefore, 
  \begin{align}
    \nonumber \|\cR_\beta[\psi_{\beta_i,p}]  \hspace{-0.5mm}   \shortminus  \hspace{-0.5mm}  \psi_{\beta_i,p}\|_p & \le C |\beta_i^{n_i}\hspace{-0.5mm}  \shortminus \hspace{-0.3mm} \beta| \hspace{-0.5mm}  \left( \hspace{-1.5mm} \left( \int_\RR \hspace{-1.0mm}  |\eta|^{2 p} e^{\shortminus a p |\eta|  }  \ \dif \eta \right)^{\hspace{-0.8mm}{1 \over p}} \hspace{-1.7mm}  + \hspace{-0.7mm}  \left(  \int_\RR  \hspace{-1.5mm}  e^{\shortminus a p { |\eta| \over \hat \beta_0} } |\eta|^{2 p} \ \dif \eta  \right)^{\hspace{-0.8mm}{1 \over p}} \hspace{-0.4mm} \right) \\
    \nonumber &\le  C |\beta_i^{n_i}-\beta|.
\end{align}
We conclude with the following
\begin{proposition}
For every $p \ge 1$ there exists $\psi_p \in \overline \cN_p \subset \cap_{n \ge 1} L^n(\RR)$,  which is a fixed point of $\cR_\beta$ for all $\beta \in (0,1)$.  
\end{proposition}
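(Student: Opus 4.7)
The plan is to extract a convergent subsequence from the fixed points $\{\psi_{\beta_i,p}\}$ supplied by Proposition \ref{exfp} and argue that its limit is simultaneously fixed by every $\cR_\beta$, $\beta\in(0,1)$. First I would choose $\beta_i \in (\beta_0,1)$ with $\beta_i\to 1$; since $\overline{\cN_p}$ is compact in $L^p(\RR)$ by the Frechet-Kolmogorov-Riesz-Weil theorem, I can pass to a subsequence (relabelled) which converges in $L^p$ to some $\psi_p \in \overline{\cN_p}$. The inclusion $\overline{\cN_p}\subset \bigcap_{n\ge 1}L^n(\RR)$ is immediate from the uniform majorant $|\psi|\le k e^{-a|\eta|}$ characterising $\cE_{a,k}$, which survives $L^p$-limits up to modification on a null set.

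To prove $\cR_\beta[\psi_p]=\psi_p$ for an arbitrary $\beta\in(0,1)$, I would use the triangle inequality
\begin{equation*}
\|\cR_\beta[\psi_p]-\psi_p\|_p \le \|\cR_\beta[\psi_p]-\cR_\beta[\psi_{\beta_i,p}]\|_p + \|\cR_\beta[\psi_{\beta_i,p}]-\psi_{\beta_i,p}\|_p + \|\psi_{\beta_i,p}-\psi_p\|_p.
\end{equation*}
The third term vanishes as $i\to\infty$ by construction. The middle term is exactly the quantity that was estimated in the discussion preceding the statement: choosing integers $n_i$ with $\beta_i^{n_i}\to\beta$ (possible because $-1/\ln\beta_i\to\infty$ forces the admissible interval of integers to be eventually non-empty) and invoking Lemma \ref{powerbeta} to identify $\cR_{\beta_i^{n_i}}[\psi_{\beta_i,p}]=\psi_{\beta_i,p}$, one obtains $\|\cR_\beta[\psi_{\beta_i,p}]-\psi_{\beta_i,p}\|_p \le C|\beta_i^{n_i}-\beta|\to 0$, with the uniform exponential decay on $\overline{\cN_p}$ absorbing the $L^p$-constants.

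The step I would flag as the main obstacle is the continuity estimate $\|\cR_\beta[\psi_p]-\cR_\beta[\psi_{\beta_i,p}]\|_p\to 0$. For $p>3/2$ it is a direct consequence of the continuity of $\cR_\beta$ on $L^p(\RR)$ established at the start of Section \ref{secapriori}. For general $p\ge 1$ I would exploit the common exponential majorant on $\overline{\cN_p}$: the linear piece of $\cR_\beta[\psi_p]-\cR_\beta[\psi_{\beta_i,p}]$ is pointwise dominated by a multiple of $|\psi_p-\psi_{\beta_i,p}|(\eta/\beta)$, which contributes $O(\|\psi_p-\psi_{\beta_i,p}\|_p)$ after rescaling, while the bilinear piece decomposes as $(\psi_p-\psi_{\beta_i,p})\bullet \psi_p + \psi_{\beta_i,p}\bullet (\psi_p-\psi_{\beta_i,p})$, whose $L^p$-norm is controlled by $\|\psi_p-\psi_{\beta_i,p}\|_p$ via Young's convolution inequality applied with the exponentially decaying partner in $L^1$.

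Assembling the three bounds and letting $i\to\infty$ yields $\cR_\beta[\psi_p]=\psi_p$ as an identity in $L^p(\RR)$ for every $\beta\in(0,1)$, which is the desired conclusion; the membership $\psi_p\in \bigcap_{n\ge 1}L^n(\RR)$ has already been noted.
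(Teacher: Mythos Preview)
Your argument is correct and follows the same route as the paper: pass to a convergent subsequence in the compact set $\overline{\cN_p}$, use Lemma~\ref{powerbeta} together with the approximation $\beta_i^{n_i}\to\beta$ to get $\|\cR_\beta[\psi_{\beta_i,p}]-\psi_{\beta_i,p}\|_p\to 0$, and conclude. Your triangle-inequality presentation is in fact more explicit than the paper's, which records the middle estimate and then simply states the proposition; in particular you isolate and justify the continuity step $\|\cR_\beta[\psi_p]-\cR_\beta[\psi_{\beta_i,p}]\|_p\to 0$ for all $p\ge 1$ via the exponential majorant and Young's inequality, whereas the paper's continuity proposition is stated only for $p>3/2$ and the passage to the limit is left implicit.
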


\section{More on the common fixed point}\label{secfp} We can, however, go one step further and derive an exact form of the limit function $\psi_p$. First,    
  \begin{align}
\nonumber  e^{-\beta^{2}_i \eta^2} \hspace{-0.8mm}\psi_{\beta_i,p}(\beta_i \eta) \hspace{-0.4mm} - \hspace{-0.4mm} e^{-\eta^2} \hspace{-0.8mm}\psi_{\beta_i,p}(\eta)&= 2 \int_{\beta_i \eta}^{\eta} e^{-\zeta^2} (\psi_{\beta_i,p} \bullet \psi_{\beta_i,p})(\zeta) \ \dif \zeta \\
  \label{eq_diff1}   &=  (1\hspace{-0.4mm}-\hspace{-0.4mm}\beta_i) \hspace{-1mm} \left( \hspace{-0.2mm}  2 e^{-\eta^2} \hspace{-1.1mm} \eta (\psi_{\beta_i,p} \bullet \psi_{\beta_i,p})(\eta)\hspace{-0.4mm}+ \hspace{-0.4mm}  \eta I(\eta,\beta_i) \hspace{-0.7mm} \right)\hspace{-0.6mm},
  \end{align}
where, according to Lebesgue differentiation theorem, the remainder $I$
\begin{equation}
  \nonumber  \left|I(\eta,\beta_i)\right|= 2{1 \over (1-\beta_i) |\eta|}\left|\int_{\beta_i \eta}^{\eta} e^{-\zeta^2} \psi_{\beta_i,p} \bullet \psi_{\beta_i,p} (\zeta) -  e^{-\eta^2}  \psi_{\beta_i,p} \bullet \psi_{\beta_i,p}(\eta) \ \dif \zeta \right|
\end{equation}
converges to zero a.e. as $\beta_i\rightarrow 1$. In fact, by the Lebesgue dominated convergence theorem, $I(\eta,\beta_i)$ converges to zero in $L^p$. Indeed, $|I(\eta,\beta_i)|$ can be bounded by the function $2 e^{-\beta_0^2 \eta^2 - a \beta_0 |\eta|} |\eta| \in L^p$, with $a$ as in Proposition $\ref{bounds}$. 
We, therefore, have that the following functions are in $L^p(\RR)$:
\begin{equation*}
F_p(\eta,\beta_i)= {e^{-\beta^{2}_i \eta^2} \psi_{\beta_i,p}(\beta_i \eta) - e^{-\eta^2} \psi_{\beta_i,p}(\eta) \over (1-\beta_i) \eta},
\end{equation*}
and $F_p(\eta,\beta_i) - 2 e^{-\eta^2} (\psi_{\beta_i,p} \bullet \psi_{\beta_i,p})(\eta)$ converges to $0$ in  $L^p(\RR)$ as $\beta_i \rightarrow 1$.

We claim that the sequence  $F_p(\eta,\beta_i)$ converges to the weak derivative of the sequence $\phi_{\beta_i,p}(\eta)= -e^{- \eta^2} \psi_{\beta_i,p}(\eta)$. Indeed, given $v \in C_0^\infty(\RR)$
\begin{align}
  \nonumber \int \displaylimits_\RR  F_p(\eta,\beta) v(\eta)  \dif \eta & =  \int_\RR \phi_{\beta,p}(\eta){v(\eta) -  v\left({\eta \over \beta} \right)  \over (1  -   \beta) \eta } \ \dif  \eta   -  \int_\RR   \phi_{\beta,p}(\beta \eta){v(\eta)   \over (1  - \beta) \eta }  \  \dif \eta  +\\
  \nonumber & \hspace{49.6mm} +  \int_\RR \phi_{\beta,p}(\eta){v\left({\eta \over \beta} \right) \over (1  -  \beta) \eta}  \  \dif \eta.
\end{align}
The last two integrals cancel after a change of variables. Therefore,
\begin{eqnarray}
  \nonumber \int_\RR F_p(\eta,\beta) v(\eta) \ \dif \eta =-\int_\RR \phi_{\beta,p}(\eta)v'(\eta) \ \dif \eta + \int_\RR \phi_{\beta,p}(\eta)O((1-\beta) \eta) \ \dif \eta.
\end{eqnarray}
Since $\phi_\beta,p$ is exponentially bounded, the last integral is $O(1-\beta)$. Therefore, for any $p \ge 1$ the points $\psi_{\beta_i,p}$ $L^p$-converge to a weak $L^{p}$-solution of 
\begin{equation}
\label{limiteq}   \vartheta'(\eta)=2  \eta \vartheta(\eta)- 2 \vartheta \bullet \vartheta(\eta).
\end{equation}
Since any weak $L^p$-solution of equation $\ref{limiteq})$ is necessarily differentiable, it is enough to consider only classical solutions of   $\ref{limiteq})$.  This is a Riccati equation after the Laplace transform $\hat \vartheta(s)=\cL[\vartheta](s)$:
\begin{equation}
\label{Riccati} \hat \vartheta'(s)=-\hat \vartheta(s)^2-{s \over 2} \hat \vartheta(s)+{\nu \over 2},
\end{equation}
where $\nu:=\vartheta(0)=\psi_p(0)$. Equation $(\ref{Riccati})$ has a trivial solution when $\nu=0$. When $\nu=1$, its has a solution $\hat \vartheta(s)=1/s$, corresponding to the solution $\vartheta(\eta)=1$ of $(\ref{limiteq})$. The substitution $\hat \vartheta(s)= v'(s)/v(s)$ reduces this to a second order linear differential equation, and a further change of variables $v(s)=e^{-{s^2 \over 4}}g(s)$, reduces it to a Kummer's equation for the function $u(s^2/4)=g(s)$:
$$z u''(z)+\left({1 \over 2} -z  \right) u'(z) -{1+\nu \over 2} u(z)=0,$$
with the general solution
$$u(z)=c_1 M\left(s_1, {1 \over 2},z \right)+c_2 U\left(s_1, {1 \over 2},z \right),$$
where $s_1=(1+\nu)/2$,  $M$ is the confluent hypergeometric function, and $U$ is Tricomi's confluent hypergeometric function:
\begin{equation*}
 \nonumber  U\left(s_1, {1 \over 2},z \right)={\sqrt{\pi} \over \Gamma(s_2)}  M\left(s_1, {1 \over 2},z \right)-{2 \sqrt{\pi} \over  \Gamma (s_1)} \sqrt{z} M\left(s_2, {3 \over 2}, z\right),
  \end{equation*}
where  $s_2=(2+\nu)/2$. In particular,
\begin{equation}
  \label{hatvartheta} \hat \vartheta(s)={d \over d s} \ln \left( c_1  e^{-{s^2 \over 4}} M\left(s_1, {1 \over 2},{s^2 \over 4} \right)+c_2 e^{-{s^2 \over 4}} U\left(s_1, {1 \over 2},{s^2 \over 4} \right)    \right)
\end{equation}
is a  one-parameter family of solutions of   $(\ref{Riccati})$, the parameter being $(c_1,c_2) \in \RRPP^1$. Taking the inverse Laplace transform of $(\ref{hatvartheta})$ and equating its value at $0$ with $\vartheta(0)$, one obtains the the parameter $(c_1,c_2)$.

We consider solutions for $\nu <1$.
\begin{equation}
\label{hatvartheta2}  \hat \vartheta(s)(s)={d \over d s} \ln  \hspace{-0.5mm} \left(  \hspace{-0.5mm}  c_1 e^{-{s^2 \over 4}} M  \hspace{-0.8mm} \left( \hspace{-0.5mm} {1+\nu \over 2}, {1 \over 2},{s^2 \over 2}  \hspace{-0.5mm} \right)+c_2  e^{-{s^2 \over 4}} U  \hspace{-0.8mm} \left(  \hspace{-0.5mm} {1+\nu \over 2}, {1 \over 2},{s^2 \over 2}  \hspace{-0.5mm} \right)  \hspace{-0.5mm} \right) \hspace{-0.5mm}.
  \end{equation}
Since the asymptotics of function $M$ is
\begin{equation}
\label{asympt} M(a,b,z) \sim \Gamma(b) \left( { e^z z^{a-b} \over \Gamma(a)} + {(-z)^{-a} \over \Gamma(b-a)} \right),
\end{equation}
we get that for large positive $s$,
$$ e^{-{s^2 \over 4}} M\left({1+\nu \over 2}, {1 \over 2},{s^2 \over 2} \right) \sim {\rm const} \ s^{\nu},$$
and similarly for $U$. Therefore, the asymptotics of $\hat \vartheta(s)$ is $\nu/s$, which is bounded for large $s$ from above by $1/(s+a)$ for some positive $a$, and $\hat \vartheta$ is indeed a Laplace transform of an exponentially bounded function.

We conclude that
\begin{equation}
  \label{varthetastar}  \vartheta_{\nu}(\eta)= \cL^{-1}[\hat \vartheta](\eta),
\end{equation}
where $\cL^{-1}$ is the inverse Laplace transform, is a family of solutions of  equation $(\ref{limiteq})$ such that $\vartheta_{\nu} \in \overline{\cN_p}$ for some $\nu \in (0,1)$.

This concludes the prof of the Main Theorem.

We emphasize, however, that the computation of the inverse Laplace transform for the function $(\ref{hatvartheta2})$ is a non-trivial undertaking which can not be performed in quadratures.

\bibliographystyle{plain}
\bibliography{biblio}
\end{document}